\crefname{hypothesis}{Hypothesis}{Hypotheses}
\title{An entropic Fourier method for the Boltzmann equation}
\author{Zhenning Cai\thanks{Department of Mathematics, National University of
    Singapore, Singapore 119076 (\email{matcz@nus.edu.sg}).}
\and Yuwei Fan\thanks{Department of Mathematics, Stanford University, Stanford, CA 94305
    (\email{ywfan@stanford.edu}).}
\and Lexing Ying\thanks{Department of Mathematics and Institute for 
    Computational and Mathematical Engineering, Stanford University, Stanford, CA 94305
    (\email{lexing@stanford.edu}).}
}
\newtheorem{condition}{Condition}
\newcommand\bbN{\mathbb{N}}
\newcommand\bbZ{\mathbb{Z}}
\newcommand\bbP{\mathbb{P}}
\newcommand\bbR{\mathbb{R}}
\newcommand\bbS{\mathbb{S}}
\newcommand\be{{e}}
\newcommand\bg{{g}}
\newcommand\bu{{u}}
\newcommand\bv{{v}}
\newcommand\bx{{x}}
\newcommand\dd{\,\mathrm{d}}
\newcommand\mB{\mathcal{B}}
\newcommand\mD{\mathcal{D}}
\newcommand\mI{\mathcal{I}}
\newcommand\mP{\mathcal{P}}
\newcommand\mQ{\mathcal{Q}}
\newcommand\mS{\mathcal{S}}
\newcommand\bone{\boldsymbol{\mathrm{1}}}
\newcommand\F{F}
\newcommand\f{f}
\newcommand\y{{y}}
\newcommand\z{{z}}
\newcommand\X{{X}}
\newcommand\K{{K}}
\newcommand\T{\mathsf{T}}
\newcommand\G{\mathsf{G}}
\newcommand\C{\mathsf{C}}
\newcommand\fast{\mathsf{fast}}
\newcommand\rmspan{\mathrm{span}}
\newcommand\supp{\mathrm{Supp}}
\newcommand\imag{\boldsymbol{\mathrm{i}}}
\newcommand\pd[2]{\dfrac{\partial {#1}}{\partial {#2}}}
\newcommand\od[2]{\dfrac{\mathrm{d}{#1}}{\mathrm{d}{#2}}}
\begin{document}

\maketitle

\begin{abstract}
    We propose an entropic Fourier method for the numerical discretization of the Boltzmann
    collision operator. The method, which is obtained by modifying a Fourier Galerkin method to
    match the form of the discrete velocity method, can be viewed both as a discrete velocity method
    and as a Fourier method. As a discrete velocity method, it preserves the positivity of the
    solution and satisfies a discrete version of the H-theorem. As a Fourier method, it allows one
    to readily apply the FFT-based fast algorithms. A second-order convergence rate is validated by
    numerical experiments.
\end{abstract}

\begin{keywords}
    Fourier  method; discrete velocity method; Boltzmann equation; modified Jackson filter;
    H-theorem; positivity.
\end{keywords}

\begin{AMS}
  65M70, 65R20, 76P05
\end{AMS}

\section{Introduction}
Gas kinetic theory describes the statistical behavior of a large number of gas molecules in the
joint spatial and velocity space. It has been widely used to model gases outside the hydrodynamic
regime, for example in the field of rarefied gas dynamics.  Let $\f(t,\bx,\bv)$ be the mass density
distribution of the particles, depending on the time $t\in\bbR^+$, position $\bx\in\bbR^d$ ($d\geq
2$) and microscopic velocity $\bv\in\bbR^d$. Based on the molecular chaos assumption, the Boltzmann
equation 
\begin{equation}\label{eq:Boltzmann}
  \pd{\f}{t}+\bv\cdot\nabla_{\bx}\f=\mQ[\f,\f],\quad
  \f(0,x,v) = \f^0(x,v)
\end{equation}
for the evolution of $\f(t,\bx,\bv)$ was derived in \cite{Boltzmann} and has served as the
fundamental equation in the gas kinetic theory. When modeling the binary interaction between the
particles, the Boltzmann collision operator $\mQ[\f,\f]$ takes the form
\begin{equation}\label{eq:collision}
  \mQ[\f,\f](\bv) = \int_{\bbR^d}\int_{\bbS^{d-1}} \mB(\bv-\bv_*,\omega) \left[
    \f(\bv')\f(\bv_*')-\f(\bv)\f(\bv_*) \right] \dd\bv_*\dd\omega
\end{equation}
for monatomic gases, where
\[
\bv' = \frac{\bv+\bv_*}{2}+\frac{|\bv-\bv_*|}{2}\omega,\qquad
\bv'_* = \frac{\bv+\bv_*}{2}-\frac{|\bv-\bv_*|}{2}\omega
\]
are the post-collisional velocities of two particles with pre-collisional velocities $\bv$ and
$\bv_*$, and $\omega$ is the angular parameter of the collision. Here the variables $t$ and $\bx$
are omitted for simplicity and we shall continue doing so when focusing only on the collision term.
The collision kernel $\mB$ is non-negative and usually takes the form
\begin{equation}\label{eq:Bdef}
\mB(\bv-\bv_*,\omega) = b(|\bv-\bv_*|, \cos\theta),\qquad 
\cos\theta = |(\bv-\bv_*)\cdot\omega|/|\bv-\bv_*|.
\end{equation}

The Boltzmann equation \eqref{eq:Boltzmann} guarantees that $\f(t,\bx,\bv)$ remains non-negative if
the initial value $\f(t=0,\bx,\bv)$ is also non-negative \cite{Simons1978}. The symmetry of the
collision term \eqref{eq:collision} and the fact $\dd\bv\dd\bv_*=\dd\bv'\dd\bv'_*$ imply that for
any function $\psi(\cdot)$:
\begin{small}
  \begin{equation}
    \begin{aligned}
      &\int_{\bbR^d}\psi(\bv)\mQ[\f,\f](\bv)\dd\bv=\\ 
      &\quad\frac{1}{4}
      \int_{\bbR^d}\int_{\bbR^d}\int_{\bbS^{d-1}}\!  \left(
      \psi(\bv)+\psi(\bv_*)-\psi(\bv')-\psi(\bv'_*) \right) \mB[\f(\bv') \f(\bv'_*)-\f(\bv)
        \f(\bv_*)]\dd\bv_*\dd\bv\dd\omega.
    \end{aligned}
  \end{equation}
\end{small}%
Setting $\psi(\bv)=1,\bv,|\bv|^2$ gives rise to the
conservation of the mass, momentum and energy
\begin{equation}
    \int_{\bbR^d}\mQ[\f,\f](\bv)\dd\bv=0,\quad
    \int_{\bbR^d}\mQ[\f,\f](\bv)\bv\dd\bv=0,\quad
    \int_{\bbR^d}\mQ[\f,\f](\bv)|\bv|^2\dd\bv=0,
\end{equation}
respectively. The famous H-theorem that states the monotonicity of the entropy
\begin{equation}
  \begin{aligned}
    &\int_{\bbR^d}\mQ[\f,\f](\bv)\ln(\f(\bv)) \dd\bv = \\
    &\quad \frac{1}{4}
    \int_{\bbR^d}\int_{\bbR^d}\int_{\bbS^{d-1}}\!
    \ln\left( \frac{\f(\bv)\f(\bv_*)}{\f(\bv')\f(\bv'_*)} \right)
    \mB[\f(\bv') \f(\bv'_*)-\f(\bv)
      \f(\bv_*)]\dd\bv_*\dd\bv\dd\omega
    \leq 0
  \end{aligned}
\end{equation}
can also be obtained by setting $\psi(\bv)=\ln(\f(\bv))$.

By introducing new variables $\y=\bv'-\bv$ and $\z=\bv'_*-\bv$ and carrying out algebra
calculations, the Boltzmann collision operator can be rewritten as (see \cite{carleman1957,
  Panferov2002, Mouhot} for details)
\begin{equation} \label{eq:Carleman}
  \mQ[\f,\f](\bv)=\int_{\bbR^d}\int_{\bbR^d}\tilde{\mB}(\y,\z)\delta(\y\cdot \z) [\f(\bv+\y)\f(\bv+\z)
    -\f(\bv)\f(\bv+\y+\z)]\dd \y\dd \z.
\end{equation}
This is the well-known Carleman representation \cite{carleman1957} of the Boltzmann collision
operator, where $\tilde{\mB}(y,z)$ is related to $\mB(\bv-\bv_*,\omega)$ in \eqref{eq:Bdef} by
\begin{equation}
    \tilde{\mB}(\y,\z) = 2^{d-1} \mB\left(\y+\z,\frac{\y-\z}{|\y-\z|}\right)
    |\y+\z|^{2-d}.
\end{equation}

Though the Boltzmann equation serves as the fundamental equation in gas dynamics, its high
dimensional nature and the complexity of the collision operator pose difficulties for its numerical
solution. A classical method is the direct Monte Carlo simulation \cite{Bird}, which uses simulated
particles to mimic gas molecules and handles the collisions in a stochastic way. Though it treats
the high dimensionality effectively, the convergence order is low and the numerical solution appears
rather oscillatory.

With the rapid growth of computing power, it has become more practical to solve the Boltzmann
equation with deterministic methods. For all deterministic approaches, the complexity of the
collision integral poses the most serious difficulty for numerical computation. Therefore, this
paper focuses on the spatially homogeneous case
\begin{equation}\label{eq:Boltzmann_homogeneous}
  \pd{\f}{t}=\mQ[\f,\f]
\end{equation}
for simplicity.

In the past decades, several deterministic schemes have been developed for the Boltzmann collision
term. Two methods that have attracted the most attention are the discrete velocity method (DVM)
\cite{goldstein1989,rogier1994, Bobylev1995, Panferov2002} and the Fourier Galerkin method (FGM)
\cite{bobylev1996difference, pareschi1996Spectral, pareschi2000Spectral}, which will be reviewed in
what follows.

\subsection{Discrete velocity method} \label{sec:DVM}

The discrete velocity method (DVM) assumes that the particle velocity takes only values from a
finite set. Consider a domain $\mD_T = [-T,T]^d$ for the velocity variable $v$, that is discretized
uniformly with step size $h=2T/N$ for a positive integer $N$ (which is assumed to be odd for
simplicity). By adopting the $d$-dimensional multi-index notation $k=(k_1,\ldots,k_d)$, one can
denote the set of discrete velocity samples by
\begin{equation}\label{eq:X}
  \X=\{h\cdot k | k=(k_1,\ldots,k_d),-n\le k_1,\ldots,k_d\le n\}.
\end{equation}
where $N=2n+1$. In the rest of this paper, we use the lower-case letters $p,q,r$ and $s$ to denote
the discrete velocity samples in $\X$. 

Using $\F_r(t)$ for $r\in\X$ as the numerical approximations of the distribution function
$\f(t,\bv)$ at the points in $\X$, the governing equations of DVM for $\F_r(t)$ are
\begin{equation}\label{eq:dvmcollision}
  \od{\F_r(t)}{t}=Q_r(t) :=\sum_{p,q,s\in\X}A_{pq}^{rs}\left( \F_p(t)\F_q(t)-\F_r(t)\F_s(t) \right),\quad
  r \in X.
\end{equation}
Here $Q_r(t)$ serves as an approximation to $\mQ[\f,\f](t,r)$. The coefficients $A_{pq}^{rs}$ are
non-negative constants and satisfy the conservation relations
\begin{equation}\label{eq:conservationA}
  A_{pq}^{rs} \neq 0
  \quad \Rightarrow \quad
  p+q=r+s\quad\text{and}\quad|p|^2+|q|^2=|r|^2+|s|^2
\end{equation}
and the symmetry property
\begin{equation}\label{eq:symmetryrelationA}
  A_{pq}^{rs}=A_{qp}^{rs}=A_{rs}^{pq}.
\end{equation}

The property \eqref{eq:conservationA} shows that the collisions in DVM also satisfy the momentum and
energy conservation, and the property \eqref{eq:symmetryrelationA} implies that the collisions are
also reversible as in the Boltzmann equation. These two facts guarantee that DVM maintains a number
of fundamental physical properties of the continuous Boltzmann equation, such as (a) the positivity
of the distribution function, (b) the exact conservation of mass, momentum and energy, and (c) a
discrete H-theorem.

More precisely, the values $\F_r(t)$ for $r\in\X$ are always non-negative if the initial values
$\F_r(t=0)$ are non-negative \cite{Simons1978,platkowski1988discrete}. 
The symmetry relation \eqref{eq:symmetryrelationA} implies that
\begin{equation}\label{eq:dvm_sym}
  \sum_{r\in\X} Q_r\psi_r=\frac{1}{4}\sum_{p,q,r,s\in\X} A_{pq}^{rs}\left(
  \psi_r+\psi_s-\psi_p-\psi_q \right) \left( \F_p\F_q-\F_r\F_s \right).
\end{equation}
Combining \eqref{eq:dvm_sym} and the relations \eqref{eq:conservationA} gives rise to the
conservation of mass, momentum and energy in the discrete sense
\begin{equation}
  \sum_{r\in\X}Q_r =0, \quad \sum_{r\in\X}Q_r r =0, \quad \sum_{r\in\X}Q_r |r|^2 = 0.
\end{equation}
After letting $\psi_r = \ln(\F_r)$ in \eqref{eq:dvm_sym}, one obtains a discrete version of the
H-theorem for DVM
\begin{equation}
  \sum_{r\in\X}Q_r\ln(\F_r) =\frac{1}{4}\sum_{p,q,r,s\in\X} A_{pq}^{rs} \ln\left(
  \frac{\F_r\F_s}{\F_p\F_q} \right) \left( \F_p\F_q-\F_r\F_s \right)
  \leq 0
\end{equation}
using the non-negativity of the coefficients $A_{pq}^{rs}$ and the monotonicity of the $\ln$
function. Notice that, in this argument, the symmetry relations \eqref{eq:symmetryrelationA}, the
non-negativity of $A_{pq}^{rs}$, and the non-negativity of the initial values are all essential for
derivation of the H-theorem.

As a direct discretization for the high-dimensional integral of the collision term, DVM has a rather
high computational cost $O(N^{2D+\delta})$ for some $0<\delta\leq 1$
\cite{parischi2014numerical}. It is also difficult to achieve decent convergence rate due to the
insufficiency collision pairs in the Cartesian grid used for velocity
discretization. More precisely, in the 2D case, the rate of convergence of DVM introduced in
\cite{goldstein1989} is only $O((1/\log h)^p)$ with $p<1/2-1/\pi$ \cite{Fainsilber2006}. For the 3D
case, the best rate of convergence of DVM is also slower than the first order \cite{Palczewski1997,
  Panferov2002}.

The method in \cite{morris2008improvement, varghese2007arbitrary} tries to improve the accuracy of
DVM by interpolation. While the mass, momentum and energy are conserved in this scheme, positivity
and the H-theorem fail to hold. The fast algorithm in \cite{MouhotFastDVM} reduces the computational
cost of DVM to $O(\bar{N}^dN^d\log(N))$ with some parameter $\bar{N}\ll N$ for the hard sphere
molecules, but it abandons the conservation of momentum and energy.

\subsection{Fourier Galerkin method} \label{sec:FGM}

The Fourier-based methods assume that the distribution function $\f(t,v)$ is
supported (in the $v$ variable) in a ball $B_{R/2}$ centered at the origin with radius $R / 2$.
Under this assumption, it makes sense to focus on the functions $f(t,v)$ with $\supp(f)\subset B_{R
/ 2}$. For those functions, $\supp(\mQ[\f,\f])\subset B_{\sqrt{2}R/2}$ and the collision term
$\mQ[\f,\f]$ reduces to a truncated version $\mQ^R[\f,\f]$ defined as
\begin{equation}\label{eq:approximated_collision}
  \mQ^R[\f,\f](\bv) := \int_{B_R}\int_{B_R}\tilde{\mB}(\y,\z)\delta(\y\cdot \z)[\f(\bv+\y)
    \f(\bv+\z) - \f(\bv)\f(\bv+\y+\z)]\dd \y\dd \z,
\end{equation}
where the superscript in $\mQ^R[f,f]$ denotes the truncation radius. In order to obtain a spectral
approximation to the collision term, one restricts the domain of the distribution function $\f(v)$
to the cube $\mD_T = [-T,T]^d$ with $T\geq \frac{3\sqrt{2}+1}{4}R$ in order to reduce aliasing. One
then extends it periodically to the whole space. (See \cite{pareschi2000Spectral, Mouhot} for
details of the derivation). After periodization, $\f(\bv)$ can be written as a Fourier series
\begin{equation}\label{eq:fourierseries}
  \f(\bv) = \sum_{k\in\bbZ^d}\hat{\f}_k E_k(\bv),
  \qquad
  \hat{\f}_k =\frac{1}{(2T)^d}\int_{\mD_T}\f(\bv)    E_{-k}(\bv) \dd\bv,
\end{equation}
where $E_k(\bv)=\exp\left( \frac{\imag\pi}{T}k\cdot\bv \right)$.  Substituting
\eqref{eq:fourierseries} into \eqref{eq:approximated_collision} gives rise to the following
representation of the truncated collision operator:
\begin{equation}\label{eq:FGM_Qv}
  \mQ^R[\f,\f](\bv)=\sum_{l,m\in\bbZ^d}\left( \hat{B}(l,m)-\hat{B}(m,m) \right)\hat{\f}_l\hat{\f}_m
  E_{l+m}(\bv), \quad v\in\mD_T,
\end{equation}
where
\begin{equation}\label{eq:def_hatB}
  \hat{B}(l,m) := \int_{B_R}\int_{B_R} \tilde{\mB}(\y,\z)\delta(\y\cdot\z)E_{l}(\y)E_{m}(\z)
  \dd\y\dd\z,
  \quad
  l,m\in\bbZ^d.
\end{equation}
It is easy to check that the coefficients $\hat{B}(l,m)$ are real and satisfy the symmetry relations
\begin{equation} \label{eq:sym_B}
    \hat{B}(l,m) = \hat{B}(m,l) = \hat{B}(l,-m).
\end{equation}
In terms of the Fourier expansion, 
\begin{equation}\label{eq:FGM_Qk}
  \hat{\mQ}^R[\f,\f]_k=\sum_{l,m\in\bbZ^d}\bone(l+m-k) \left( \hat{B}(l,m) -\hat{B}(m,m)
  \right) \hat{\f}_l\hat{\f}_m,
  \quad k\in\bbZ^d.
\end{equation}
Here $\bone(\cdot)$ is the indicator function, equal to $1$ at the origin and $0$ otherwise.

The Fourier Galerkin method (FGM) in the literature (e.g. \cite{pareschi2000Spectral}) starts with a
finite square grid of Fourier modes
\begin{equation}\label{eq:K}
  K := \{k | k=(k_1,\ldots,k_d),-n\le k_1,\ldots,k_d\le n\},
\end{equation}
and the subspace
\begin{equation}
  \bbP_N=\rmspan\left\{ E_k(\bv) | k\in K \right\} \subset L_{\mathrm{per}}^2(\mD_T).
\end{equation}
We shall denote the grid points in $K$ with lower-case letters $j,k,l,m$.
FGM approximates the collision term \eqref{eq:FGM_Qk} by projecting it to the subspace $\bbP_N$
\begin{equation}\label{eq:FGM_Q}
  \hat{Q}^\G_k :=
  \sum_{l,m\in K}\bone(l+m-k) \left( \hat{B}(l,m) -\hat{B}(m,m) \right)
  \hat{\F}_l\hat{\F}_m,
  \quad k\in K,
\end{equation}
where $\hat{F}_k$ for $k\in K$ serve as the approximation of the Fourier modes $\hat{\f}_k$ of the
exact solution.


Putting together the above discussion, one arrives at the equations of the discrete Fourier
coefficients $\hat{F}_k(t)$ for $k\in K$ for FGM \cite{pareschi2000Spectral}
\begin{equation}\label{eq:FG}
  \left\{
  \begin{aligned}
    &\od{\hat{\F}_k}{t}= \hat{Q}^\G_k = \sum_{l,m\in K} \bone(l+m-k)
    \left( \hat{B}(l,m)-\hat{B}(m,m) \right)\hat{\F}_l\hat{\F}_m,\\
    &\hat{\F}_k(t=0)=\hat{\F}_k^0,
  \end{aligned}\right.
\end{equation}
where $\hat{\F}_k^0$ are the Fourier coefficients of the initial condition $\f^0(\bv)$ restricted on
$\mD_T$.

\begin{remark}
  The above description of the Fourier Galerkin method is based on the Carleman representation of
  the Boltzmann collision operator \eqref{eq:Carleman}. Starting from the classical form
  \eqref{eq:collision}, one can also derive a relation similar to \eqref{eq:FGM_Q} (see
  \cite{pareschi2000Spectral} for details), while the definition of $\hat{B}(l,m)$ is slightly
  different:
  \begin{equation} \label{eq:classical_hatB}
    \begin{split}
      \hat{B}(l,m) &= \int_{B_{R}} \int_{\bbS^{d-1}} \mB(\bg, \omega) E_l\left(
      \frac{1}{2} (\bg + |\bg| \omega) \right) E_m\left( \frac{1}{2} (\bg - |\bg| \omega)
      \right) \dd\bg \dd\omega,
    \end{split}
  \end{equation}
  where $T\geq\frac{3+\sqrt{2}}{4}R$ in the definition of $E_k(\cdot)$. It is straightforward to
  check that these coefficients also satisfy the symmetry relation \eqref{eq:sym_B}.
\end{remark}

FGM achieves spectral accuracy, although the computational cost is still as high as $O(N^{2d})$
\cite{pareschi2000Spectral}. Two fast algorithms \cite{Mouhot, Hu2016} reduced the cost to
$O(MN^d\log(N))$ for the hard sphere molecules (the Maxwell molecules for 2D case) \cite{Mouhot} and
to $O(MN^{d+1}\log(N))$ for general collision kernels \cite{Hu2016}, where $M$ is the number of
points used for discretizing the unit sphere $\mathbb{S}^{d-1}$.

Compared to DVM, the solution of FGM loses most of the aforementioned physical properties, including
positivity, the conservation of momentum and energy, and the H-theorem. In
\cite{pareschi2000stability}, Pareschi and Russo proposed a positivity preserving regularization of
FGM by using {Fej\'er} filter at the expense of spectral accuracy. Despite this, the solution fails
to satisfy the H-theorem. The loss of the conservation can be fixed by a spectral-Lagrangian
strategy \cite{gamba2009spectral}. 

\subsection{Motivation} 
DVM preserves a number of physical properties (such as positivity of the solution, the H-theorem,
and exact conservation of mass, momentum and energy) but suffers from high computational costs and
low accuracies. FGM enjoys spectral accuracies and lower computational costs but sacrifices almost
all physical properties except the mass conservation. In this paper, we aim for a trade-off between
the physical properties and the spectral accuracy.

As a fundamental property of the solution to the Boltzmann equation, the positivity of the
distribution function helps establish the H-theorem, which is one of the crucial properties in order
to guarantee the well-posedness of the discrete system. Therefore, it makes sense to maintain the
positivity and the H-theorem, as long as it does not sacrifice significantly other properties such
as numerical accuracy and efficiency. This paper is an initial study in this direction.


To achieve this goal, we first study carefully the reason behind the loss of the H-theorem in FGM by
comparing it with DVM. With a few novel modifications to FGM, we propose an entropic Fourier method
(EFM) that preserves the positivity, the mass conservation, and the H-theorem. In addition, the
computational cost of this new method is the same as the one of FGM.

The rest of the paper is organized as follows. In Section \ref{sec:result}, we first outline the key
steps to develop EFM and state the main results of the paper. The details of the derivation and some
deeper understandings of the model are provided in Section \ref{sec:model}. Section
\ref{sec:numerical} presents the implementation of EFM and the numerical results. The paper ends
with a discussion in Section \ref{sec:conclusion}.

\section{Main result}\label{sec:result}
This section outlines the overall procedure of our derivation and lists some key results. Detailed
derivation and investigation will be given in Section \ref{sec:model}.

Aiming at developing an entropic Fourier method for the homogeneous Boltzmann equation, one works
mainly with the evolution of the Fourier coefficients $\hat{\F}_k(t)$. Recall the discrete Fourier
transform
\begin{equation}\label{eq:DFT}
  \F_p=\sum_{k\in K}\hat{\F}_k E_k(p),\qquad  \hat{\F}_k=\frac{1}{N^d}\sum_{p\in\X}\F_p E_{-k}(p),
\end{equation}
where $\X$ defined in \eqref{eq:X} and $K$ defined in \eqref{eq:K} are the sets of uniform samples
in the velocity space and the Fourier domain, respectively.

Using the discrete Fourier transform, one can instead treat the point values $\F_p$ as the degrees
of freedom and write the numerical scheme in the DVM form \eqref{eq:dvmcollision}.  According to the
derivation in Section \ref{sec:DVM}, the following condition is required in order to guarantee the
H-theorem for DVM:
\begin{condition}  \label{cond:DVM}
  The DVM defined in \eqref{eq:dvmcollision} satisfies
  \begin{enumerate}
  \item\label{it:symmetry} the coefficients $A_{pq}^{rs}$ satisfy the symmetry relation
    $A_{pq}^{rs}=A_{qp}^{rs}=A_{rs}^{pq}$;
  \item\label{it:nonnegative} the coefficients $A_{pq}^{rs}$ are non-negative, i.e
    $A_{pq}^{rs}\geq 0$;
  \item\label{it:initialvalue} the initial values are non-negative, i.e $\F_p(t=0)\geq0$ for any
    $p\in\X$.
  \end{enumerate}
\end{condition}

The general idea of our approach is to revise the existing Fourier Galerkin method (FGM) so that
Condition \ref{cond:DVM} is fulfilled. Below we list the steps that lead to a numerical scheme that
satisfies the H-theorem.
\begin{enumerate}
\item
  Apply the Fourier collocation method to \eqref{eq:FGM_Qv}. This leads to an approximation to
  \eqref{eq:FGM_Qk} in the form
  \begin{equation}\label{eq:FGM_Qc}
    \hat{Q}^\C_k = \sum_{l,m\in\K} \bone_N(l+m-k)[\hat{B}_N(l,m) - \hat{B}_N(m,m)]
    \hat{\F}_l\hat{\F}_m,\quad k\in\K,
  \end{equation}
  where $\bone_N(l):=\bone(l \bmod N)$ and $\hat{B}_N(l,m):=\hat{B}(l\bmod N, m\bmod N)$.  Here
  $\bmod$ is the symmetric modulo function, i.e. each component of $l\bmod N$ ranging from $-n$ to
  $n$ (recall $N=2n+1$). Using the relation between the Fourier coefficients and the values on
  collocation points \eqref{eq:DFT}, we can rewrite \eqref{eq:FGM_Qc} as
  \begin{equation}\label{eq:modifiedcollision2}
    Q^\C_r =\sum_{p,q,s\in X}A_{pq}^{rs}[\F_p\F_q-\F_r\F_s],\quad r\in\X,
  \end{equation}
  where $A_{pq}^{rs}$ (given in \eqref{eq:A}) is determined by the Fourier modes of the collision
  kernel $\hat{B}_N(\cdot,\cdot)$ and satisfies the symmetry relation (Condition
  \ref{cond:DVM}.\ref{it:symmetry}).
\item
  A careful study shows that $A_{pq}^{rs}$ fails to be non-negative. This can be fixed by applying a
  positivity preserving filter to $\hat{B}_N(l,m)$, i.e.,
  \begin{equation} \label{eq:Bsigma}
    \hat{B}_N^{\sigma}(l,m):=\hat{B}_N(l,m)\sigma_N(l)\sigma_N(m),
    \quad l,m\in K,
  \end{equation}
  where ${\sigma}_N(l)$ is the tensor-product of $d$ one-dimensional modified Jackson filter
  \cite{meinardus1967approximation,weisse2006Filters}.
  The modified collision term takes the following form in the Fourier domain
  \[
  \hat{Q}^\sigma_k=\sum_{l,m\in\K}\bone_N(l+m-k)[\hat{B}_N^\sigma(l,m)-\hat{B}_N^\sigma(m,m)]
  \hat{\F}_l\hat{\F}_m,\quad k\in\K.
  \]
  Using ${(A^\sigma)}^{rs}_{pq}$ to denote the coefficients determined by the new kernel modes
  $\hat{B}_N^{\sigma}(l,m)$ and writing
  \[
  Q^\sigma_r =\sum_{p,q,s\in\X}(A^\sigma)_{pq}^{rs}[\F_p\F_q-\F_r\F_s],\quad r\in\X,
  \]
  one can verify that both the symmetry relation (Condition
  \ref{cond:DVM}.\ref{it:symmetry}) and the non-negativity (Condition
  \ref{cond:DVM}.\ref{it:nonnegative}) are satisfied.
\item
  To guarantee the positivity of the initial values (Condition
  \ref{cond:DVM}.\ref{it:initialvalue}), we adopt interpolation rather
  than orthogonal projection while discretizing the initial
  distribution function.
\end{enumerate}

\paragraph{Main result.}
Summarizing the outline given above, we arrive at a new entropic
Fourier method (EFM) that takes the following simple form
\begin{equation}\label{eq:EFM}
  \left\{  
  \begin{aligned}
    &\od{\hat{\F}_k}{t}= \hat{Q}^\sigma_k = \sum_{l,m\in\K}
    \bone_N(l+m-k)
    \left( \hat{B}_N^\sigma(l,m)-\hat{B}_N^\sigma(m,m) \right)\hat{\F}_l\hat{\F}_m,\\
    &\hat{\F}_k(t=0)= \frac{1}{N^d} \sum_{r\in\X} \f(t=0,r) E_{-k}(r).
  \end{aligned}
  \right.
\end{equation}
This method preserves several key physical properties, as guaranteed by the following theorem.
\begin{theorem}\label{thm:EFM}
  If $\f(t=0,\bv)\geq 0$ for $\bv\in\bbR^d$, then the solution
  $\F_r(t)=\sum_{k\in\K}\hat{\F}_k(t)E_{k}(r)$ for $r\in\X$ of \eqref{eq:EFM} satisfies for $\forall
  t>0$,
  \begin{align}
    \label{eq:conservationMass}
    \text{conservation of mass: } & \od{}{t}\sum_{r\in\X}\F_r(t) = 0, \\
    \label{eq:nonnegativity}
    \text{non-negativity: }  & \F_r(t)\geq0,\quad r\in\X,\\
    \label{eq:Htheorem}
    \text{discrete H-theorem: } &
    \od{}{t}\sum_{r\in\X}\F_r(t)\ln \F_r(t) \leq 0.
  \end{align}
  
\end{theorem}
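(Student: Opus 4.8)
The plan is to verify that the entropic Fourier method \eqref{eq:EFM}, once rewritten in the DVM form \eqref{eq:dvmcollision}, satisfies all three parts of Condition \ref{cond:DVM}, and then to invoke the DVM arguments given in Section \ref{sec:DVM} essentially verbatim. The outline in Section \ref{sec:result} has already indicated the three corresponding steps, so the proof is mostly a matter of assembling them carefully and checking that nothing in the Fourier-to-physical-space translation is lost.

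\emph{Step 1: Rewrite \eqref{eq:EFM} in DVM form.} First I would apply the inverse discrete Fourier transform \eqref{eq:DFT} to the equation $\od{\hat{\F}_k}{t}=\hat{Q}^\sigma_k$ to obtain $\od{\F_r}{t}=Q^\sigma_r$ with $Q^\sigma_r=\sum_{p,q,s\in\X}(A^\sigma)_{pq}^{rs}(\F_p\F_q-\F_r\F_s)$, identifying the coefficients $(A^\sigma)_{pq}^{rs}$ explicitly in terms of $\hat{B}_N^\sigma(l,m)$ through formula \eqref{eq:A} (stated in Section \ref{sec:model}). This is the bookkeeping step: I need the indicator $\bone_N(l+m-k)$ to become, after summing over Fourier modes, a constraint that is nonzero only when $p+q\equiv r+s$ in the appropriate modular sense, and I need the quadratic structure $\F_l\F_m$ to turn into the DVM quadratic form. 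The key outputs are (i) the conservation constraint forcing $(A^\sigma)_{pq}^{rs}=0$ unless $p+q=r+s$ and $|p|^2+|q|^2=|r|^2+|s|^2$ (this uses the structure of $\hat B$ built from $\delta(\y\cdot\z)$ and the modular arithmetic on $\X$), and (ii) the symmetry relations. The symmetry $\hat{B}_N^\sigma(l,m)=\hat{B}_N^\sigma(m,l)=\hat{B}_N^\sigma(l,-m)$ follows from \eqref{eq:sym_B} together with the evenness of the Jackson filter $\sigma_N$, and these translate into $A_{pq}^{rs}=A_{qp}^{rs}=A_{rs}^{pq}$, i.e. Condition \ref{cond:DVM}.\ref{it:symmetry}.

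\emph{Step 2: Non-negativity of the coefficients.} The crucial analytic ingredient is that the modified Jackson filter makes $\hat{B}_N^\sigma(l,m)=\hat{B}_N(l,m)\sigma_N(l)\sigma_N(m)$ behave well enough that the physical-space coefficients $(A^\sigma)_{pq}^{rs}$ are non-negative. I expect this to be the main obstacle, and it is where the paper's detailed Section \ref{sec:model} must be used: one shows that filtering by the (tensor product of) modified Jackson filters is equivalent, in physical space, to convolving the collision kernel against a non-negative approximate-identity kernel (the Jackson kernel is a squared Dirichlet-type kernel, hence non-negative), so that the resulting DVM weights inherit non-negativity from the non-negativity of $\tilde{\mB}$ and of the Jackson kernel. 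Granting the properties of $\sigma_N$ established earlier, this gives Condition \ref{cond:DVM}.\ref{it:nonnegative}.

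\emph{Step 3: Non-negativity of the initial data, and conclusion.} Because the initial condition in \eqref{eq:EFM} is defined by interpolation, $\F_r(t=0)=\sum_{k\in\K}\hat{\F}_k(0)E_k(r)=\f(t=0,r)\geq 0$ by hypothesis, giving Condition \ref{cond:DVM}.\ref{it:initialvalue}. With all three parts of Condition \ref{cond:DVM} verified, I then quote the DVM results from Section \ref{sec:DVM}: non-negativity of $\F_r(t)$ for all $t>0$ (by the argument of \cite{Simons1978,platkowski1988discrete}, using that the loss term is linear in $\F_r$), which gives \eqref{eq:nonnegativity}; the identity \eqref{eq:dvm_sym} with $\psi_r=1$ together with the constraint $p+q=r+s$ gives $\sum_r Q^\sigma_r=0$, hence \eqref{eq:conservationMass}; and \eqref{eq:dvm_sym} with $\psi_r=\ln\F_r$ (legitimate since $\F_r>0$ for $t>0$, or by the usual limiting argument if some $\F_r(0)=0$) combined with the sign of $\ln(\F_r\F_s/(\F_p\F_q))(\F_p\F_q-\F_r\F_s)\le 0$ and $(A^\sigma)_{pq}^{rs}\ge 0$ yields \eqref{eq:Htheorem}. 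I would note that mass conservation only requires the translation invariance $p+q=r+s$, not the energy relation, which is consistent with the theorem claiming only mass conservation (the filter generally destroys the exact discrete energy conservation).
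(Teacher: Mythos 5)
Your overall strategy is exactly the paper's: verify the three parts of Condition \ref{cond:DVM} for the DVM form \eqref{eq:modifiedIVP} of EFM and then run the standard DVM arguments of Section \ref{sec:DVM} for mass conservation, positivity, and the entropy inequality. Steps 2 and 3 match the paper's proof in substance (the paper proves positivity by noting that at a point where $\F_r(t')=0$ the derivative reduces to the non-negative gain term, which is the same ``loss term vanishes with $\F_r$'' observation you invoke), and your derivation of mass conservation from \eqref{eq:dvm_sym} with $\psi_r=1$ is an acceptable variant of the paper's Fourier-side argument that $\hat{Q}^\sigma_0=0$.

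There is, however, one concretely false claim in your Step 1: you assert as a ``key output'' that the coefficients satisfy the full conservation constraint \eqref{eq:conservationA}, i.e.\ $(A^\sigma)_{pq}^{rs}=0$ unless $p+q=r+s$ \emph{and} $|p|^2+|q|^2=|r|^2+|s|^2$. This is not true for EFM. From \eqref{eq:A_G} and \eqref{eq:tildetildeA}, the coefficients are $(A^\sigma)_{pq}^{rs}=N^{-2d}\,\bone_N(r+s-p-q)\,G^\sigma(p-s,q-s)$: the momentum constraint holds only modulo the period (so even momentum is not exactly conserved), and there is no energy constraint at all, because the convolution with $\chi_N^\sigma\otimes\chi_N^\sigma$ smears the support of $\delta(\y\cdot\z)$ so that $G^\sigma(p-s,q-s)$ is generically nonzero even when $(p-s)\cdot(q-s)\neq 0$. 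The paper's Discussion states explicitly that momentum and energy conservation are lost, and your own closing parenthetical concedes this, contradicting your claim (i). Fortunately nothing in your proof of the three stated properties actually uses (i): the $\psi_r=1$ case of \eqref{eq:dvm_sym} vanishes identically without any constraint on the coefficients, and positivity and the H-theorem need only symmetry, non-negativity of $(A^\sigma)_{pq}^{rs}$, and non-negative initial data. So the proof survives once the spurious claim is deleted, but as written Step 1 asserts something that would fail if you tried to prove it.
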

The proof is presented in Section \ref{sec:proof}. Due to the positivity-preserving filter
\eqref{eq:JacksonFilter2}, the numerical accuracy of EFM in approximating the collision operator is
second order (see Section \ref{sec:birdview} for details).

Another important result for EFM is the existence of fast algorithms. For FGM, the fast
algorithms proposed in \cite{Mouhot, Hu2016} are based on the approximation of the kernel
$\hat{B}_N(\cdot,\cdot)$:
\begin{equation} \label{eq:low_rank_approx}
  \hat{B}_N(l,m)\approx \sum_{t=1}^M
  \alpha_{l+m}^t\beta_{l}^t \gamma_{m}^t,\quad l,m\in K,
\end{equation}
with the number of terms $M \ll N^d$. Since the filtered kernel $\hat{B}_N^{\sigma}(l,m)$ turns out
to have a similar approximation
\begin{equation} \label{eq:low_rank_approx_filtered}
  \hat{B}_N^{\sigma}(l,m)\approx \sum_{t=1}^M 
  \alpha_{l+m}^t
  \left(\sigma_N(l)\beta_l^t\right)
  \left(\sigma_N(m)\gamma_m^t\right),
\end{equation}
these fast algorithms still apply. Moreover, when the above approximation is applied, the H-theorem
still holds. Detailed discussion will be given in Section \ref{sec:implementation}.

\section{Entropic Fourier method}\label{sec:model}
As shown in Section \ref{sec:DVM}, a discrete H-theorem can be obtained from the classical DVM,
where the associated entropy function can be considered as a numerical quadrature for the integral
of $f \ln f$. This requires the positivity of the distribution function, which can be guaranteed by
the positivity of the discrete collision kernel $A_{pq}^{rs}$. In general, to preserve the Boltzmann
entropy in the numerical scheme, the positivity of the numerical solution needs to be enforced in a
certain sense due to the presence of $\ln f$ in the entropy function. However, in FGM, there is no
guarantee of any form of positivity in the numerical solution, and hence the H-theorem does not
hold.


In this paper, rather than enforcing the non-negativity of the whole distribution function, we take
a collocation approach and focus on the non-negativity only at the collocation points. Based on this
idea, we start from a collocation method for the homogeneous Boltzmann equation and write it as a
DVM of the function values defined at the collocation points. One then tries to alter the
coefficients to match the requirements in Condition \ref{cond:DVM} so that the H-theorem can be
subsequently derived.

The three steps listed in Section \ref{sec:result} are elaborated in the first three subsections
below.  After that, Section \ref{sec:birdview} compares the entropic Fourier method (EFM) with
other Fourier methods.

\subsection{Fourier collocation method in a DVM form} \label{sec:sym_FCM}
The mechanisms of DVM and FGM are quite different: DVM is concerned with the values of the
distribution on discrete points, whereas FGM \eqref{eq:FGM_Q}, as a Galerkin method, works on the
Fourier modes of the distribution function. It is not straightforward how to link these two methods.
Alternatively, we will consider another type of Fourier methods --- the collocation method (also
known as the pseudospectral method).

\subsubsection{Fourier collocation method}
In the Fourier collocation method (FCM), the collision term on the set $X$ is evaluated directly
using \eqref{eq:FGM_Qv}:
\begin{equation}\label{eq:FGM_Qcv}
  Q^\C_r=\sum_{l,m\in\K}\left( \hat{B}_N(l,m)-\hat{B}_N(m,m) \right)\hat{\F}_l\hat{\F}_m E_{l+m}(r),
  \quad r\in \X,
\end{equation}
where $\hat{B}_N(l,m):=\hat{B}(l\bmod N, m\bmod N)$, and $\bmod$ is the symmetric modulo function,
i.e. each component of $l\bmod N$ ranging from $-n$ to $n$ (recall $N=2n+1$). Since the above
equation only uses the value of $\hat{B}_N$ in $K$, one can use $\hat{B}$ and $\hat{B}_N$
interchangeably here. Here we note that $\hat{B}_N(l,m)$ satisfy the symmetry relation
\eqref{eq:sym_B}.

The corresponding Fourier modes can be obtained by an inverse discrete Fourier transform:
\begin{equation}\label{eq:FCM}
  \begin{aligned}
    \hat{Q}^\C_k &= \frac{1}{N^d}\sum_{r\in\X} Q_r E_{-k}(r)\\
    &= \sum_{l,m\in\K}\bone_N(l+m-k)\left( \hat{B}_N(l,m)-\hat{B}_N(m,m)\right)\hat{\F}_l\hat{\F}_m,
  \end{aligned}
\end{equation}
where $\bone_N(l):=\bone(l \bmod N)$.

If the initial value is smooth enough, due to the smoothing effect of the Boltzmann collision
operator \cite{Barbaroux2017}, both the Fourier Galerkin and the Fourier collocation methods
have spectral accuracy \cite{parischi2014numerical}. Moreover, in some cases, FCM \eqref{eq:FCM} is
numerically more efficient, especially for the fast summation algorithms in \cite{Mouhot, Hu2016}.
For example, in \cite{Mouhot}, the following approximation of $\hat{B}_N(l,m)$ is considered:
\begin{equation}
  \hat{B}_N(l,m) \approx \sum_{t=1}^M \beta^t_l\gamma_m^t,
\end{equation}
where $M\in\bbN^+$ is the total number of quadrature points on the sphere. Then the collision term
in this Galerkin method can be approximated by
\begin{equation}\label{eq:fastFGM}  
  \sum_{t=1}^M
  \sum_{l,m\in K}
  \bone(l+m-k)\left[ \left(\beta^t_l\hat{\F}_l\right) 
    \left(\gamma^t_m\hat{\F}_m\right) 
    - \hat{\F}_l\left(\beta^t_m\gamma^t_m\hat{\F}_m\right)\right].
\end{equation}
To evaluate \eqref{eq:fastFGM} efficiently, one needs to utilize FFT-based convolutions. To obtain
these coefficients, one needs the zero-padding technique to avoid aliasing. If one uses the same
method to evaluate $\hat{Q}_k$ in \eqref{eq:FCM}, then {\em no zero padding is needed}. Therefore
the collocation method shortens the length of vectors used in the Fourier transform, which makes the
algorithm faster.

\subsubsection{DVM form}
To link FCM with DVM, we split the collision term \eqref{eq:FGM_Qcv} into the gain part ($+$) and
the loss part ($-$):
\begin{equation}
  Q^{\C,+}_r = \sum_{l,m\in\K}\hat{B}_N(l,m)\hat{\F}_l\hat{\F}_m E_{l+m}(r), \quad
  Q^{\C,-}_r = \sum_{l,m\in\K}\hat{B}_N(m,m)\hat{\F}_l\hat{\F}_m E_{l+m}(r),\quad r\in\X.
\end{equation}
Noticing $E_{l+m}(r)=\sum_{k\in\K}\bone_N(l+m-k)E_{k}(r)$ and plugging \eqref{eq:DFT} into the gain part
yields
\begin{equation}\label{eq:FGM_Qcv+}
  Q^{\C,+}_r = \frac{1}{N^{2d}}\sum_{\substack{l,m,k\in\K\\p,q\in\X}} \bone_N(l+m-k)\hat{B}_N(l,m)
  E_{-l}(p)E_{-m}(q)E_{k}(r)\F_p\F_q.
\end{equation}
Since $\frac{1}{N^d}\sum_{s\in\X}E_{j}(s)= \bone_N(j)$, one can sum over $j$ to get
$\frac{1}{N^d}\sum_{j\in K,s\in\X}E_j(s)=1$. With this equation, one introduces two new indices to
\eqref{eq:FGM_Qcv+} by multiplying its right hand side with 
$\frac{1}{N^d} \sum_{j\in K,s\in X}E_j(s)$:
\begin{small}
  \begin{equation}\label{eq:Qc+}
      Q^{\C,+}_r = \frac{1}{N^{3d}} \sum_{\substack{l,m,k,j\in\K\\p,q,s\in\X}} \bone_N(l+m-k-j)
      \hat{B}_N(l-j,m-j) E_{-l}(p)E_{-m}(q)E_{k}(r)E_{j}(s)\F_p\F_q.
  \end{equation}
\end{small}%
If one introduces
\begin{equation} \label{eq:A}
  A_{pq}^{rs}=\frac{1}{N^{3d}}\sum_{l,m,k,j\in\K}\bone_N(l+m-k-j)\hat{B}_N(l-j,m-j)E_{-l}(p)E_{-m}(q)E_{k}(r)E_{j}(s),
\end{equation}
then the gain term is
\begin{equation}\label{eq:gainterm1}
  Q^{\C,+}_r =\sum_{p,q,s\in\X}A_{pq}^{rs}\F_p\F_q.
\end{equation}
Apparently, such term does take the form of the gain term of DVM \eqref{eq:dvmcollision}.

For the loss term, the identity
\begin{equation}\label{eq:def_A}
  \sum_{p,q\in\X}A_{pq}^{rs}=\frac{1}{N^d}\sum_{k,j}\bone_N(k+j)\hat{B}_N(j,j)E_{k}(r)E_{j}(s),
\end{equation}
leads to the following derivation
\begin{equation}
  \begin{aligned}
    \sum_{p,q,s\in\X}A_{pq}^{rs}\F_r\F_s
    &= \frac{1}{N^d} \sum_{k,j\in K,s\in X}\bone_N(k+j)\hat{B}_N(j,j)E_{k}(r)E_{j}(s) \F_r\F_s\\
    &= \frac{1}{N^d}\sum_{k,j,l,m\in K,s\in X}\!\bone_N(k+j)\hat{B}_N(j,j)E_{k}(r)E_{j}(s)E_{l}(r)E_{m}(s)
    \hat{\F}_l\hat{\F}_m\\
    &=\sum_{k,j,l,m\in K}\bone_N(k+j)\bone_N(j+m)\hat{B}_N(j,j)\hat{\F}_l\hat{\F}_m E_{k+l}(r)\\
    &=\sum_{l,m\in\K}\hat{B}_N(m,m)\hat{\F}_l\hat{\F}_m E_{l+m}(r) = Q^{\C,-}_r.
  \end{aligned}
\end{equation}

In summary, one can write FCM in the following DVM form
\begin{equation} \label{eq:QCvr}
  Q^\C_r = \sum_{p,q,s\in X}A_{pq}^{rs}(\F_p\F_q-\F_r\F_s)
\end{equation}
with $A_{pq}^{rs}$ given in \eqref{eq:A}. Finally, the symmetry relation (Condition
\ref{cond:DVM}.1)
\begin{equation}\label{eq:symA}
  A_{pq}^{rs} = A_{qp}^{rs} = A_{rs}^{pq}
\end{equation}
holds, as this can be easily seen by the symmetry relation of $\hat{B}_N(l,m)$ and switching the
indices in \eqref{eq:A}.



\subsection{Positivity preserving} \label{sec:filteredFGM}
As remarked earlier, in order to obtain an H-theorem for FCM, one needs to ensure that all the
coefficients $A_{pq}^{rs}$ are non-negative (Condition \ref{cond:DVM}.\ref{it:nonnegative}). Below,
we first show that $A_{pq}^{rs}$ as defined in \eqref{eq:QCvr} fail to be non-negative, and then
apply a filter to recover non-negativity.

\subsubsection{Failure of positivity preservation in FCM}
\label{sec:failure}
We start by simplifying the coefficients $A_{pq}^{rs}$ based on
\eqref{eq:A}
\begin{equation}\label{eq:modifiedA}
  \begin{aligned}
    A_{pq}^{rs}&=
    \frac{1}{N^{3d}}\sum_{l,m,k,j\in\K}\bone_N(l+m-k-j) \hat{B}_N(l-j,m-j)E_{-l}(p)E_{-m}(q)E_k(r)E_j(s)\\
    &= \frac{1}{N^{3d}}\sum_{l,m,k\in\K} \hat{B}_N(m-k, l-k) E_{-l}(p-s) E_{-m}(q-s) E_{k}(r-s).
  \end{aligned}
\end{equation}
where one uses $j=l+m-k \bmod N$. By performing a change of variables $i=(m-k)\bmod N$ and
$j=(l-k)\bmod N$, we arrive at
\begin{equation} \label{eq:A_simplified}
  \begin{aligned}
    A_{pq}^{rs} &=
    \frac{1}{N^{3d}}\sum_{i,j,k\in\K}\hat{B}_N(i,j) E_{-k-i}(p-s) E_{-k-j}(q-s) E_k(r-s)\\
    &=\bone_N(r+s-p-q)\frac{1}{N^{2d}} \sum_{i,j\in\K}\hat{B}_N(i,j)E_{-i}(p-s)E_{-j}(q-s).
  \end{aligned}
\end{equation}
By introducing
\begin{equation} \label{eq:G}
  G(\y, \z)=\sum_{i,j\in\K}\hat{B}_N(i,j)E_{-i}(\y)E_{-j}(\z),\quad \y,\z\in\mD_T,
\end{equation}
which is by definition a periodic function with period $\mD_T$, one can write compactly
\begin{equation}\label{eq:A_G}
  A_{pq}^{rs}=\frac{1}{N^{2d}}\bone_N(r+s-p-q)G(p-s, q-s),
  \quad p,q,r,s\in\X.
\end{equation}

In order to check whether $A_{pq}^{rs}$ is non-negative, one just needs to check whether
$G(\cdot,\cdot)$ is non-negative on the collocation points in $\X$. To get a better understanding of
the function $G(\cdot,\cdot)$ as defined in \eqref{eq:G}, one applies the definition of
$\hat{B}_N(\cdot,\cdot)$ to obtain
\begin{equation}\label{eq:defG}
  G(\y,\z)=\int_{B_R}\int_{B_R}\tilde{\mB}(\y',\z')\delta(\y'\cdot\z')\chi_N(\y-\y')\chi_N(\z-\z')\dd\y'\dd\z'.
\end{equation}
Here $\chi_N$ is the Dirichlet kernel over $\mD_T$ defined by
\begin{equation}
  \chi_N(\bv) = \sum_{k\in\K}E_k(\bv),\quad v\in\mD_T,
\end{equation}
and its discrete Fourier transform $\hat{\chi}_N(k)$ is equal to 1 for $k\in\K$ and 0 on
$\bbZ^d\setminus\K$. By introducing a periodic function in $\mD_T$
\begin{equation} \label{eq:Delta}
  H(\y,\z) = \tilde{\mB}(\y,\z)\delta(\y\cdot\z)\bone(|\y|\leq R) \bone(|\z|\leq R), \quad
  \y,\z\in\mD_T,
\end{equation}
one can write 
\[
G = H * (\chi_N\otimes\chi_N),
\]
where the convolution is defined periodically in $\mD_T\times\mD_T$. Equivalently, $G(\y,\z)$ is also
the truncated Fourier expansion of $H(\y,\z)$ by keeping only the frequencies in $\K$.

Although $H(\y,\z)$ is non-negative in the weak sense, its truncated Fourier approximation
$G(\y,\z)$ fails to be so. For example, the values of $G$ for the kernel
$\tilde{\mB}(\y,\z)\equiv\frac{1}{\pi}$, $R=6$ in 2D are plotted in Figure \ref{fig:G}. This clearly
shows that negative values appear as expected. Therefore in general, the H-theorem does not hold for
FCM.

\begin{figure}[!ht]
  \centering
  \includegraphics[width=.5\textwidth]{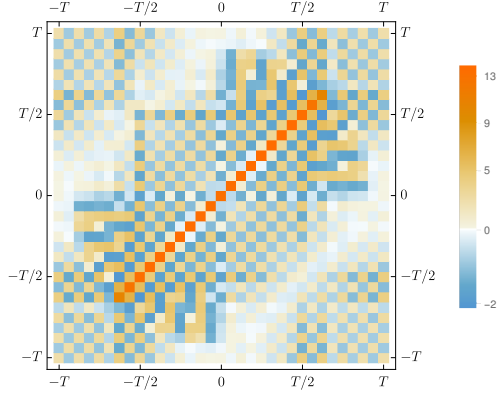}
  \caption{The values of $G(\y, \z)$ with $N=32$ at $\z = (T/2,T/2)$. The axes are the two components
    of $\y$.
  }
\label{fig:G}
\end{figure}


\subsubsection{Filtering} \label{sec:SymFGM2}
In the previous subsection, one can see that if $\chi_N(\cdot)$ were a non-negative function, then
$G(\y, \z)$ would be non-negative for any $\y$ and $\z$. Thus, in order to get non-negative
coefficients, a possible way is to replace the function $\chi_N$ by a non-negative one. Note that
$\chi_N(\bv)$ is Dirichlet kernel, which is an approximation of the Dirac delta. As $N
\rightarrow +\infty$, the function $\chi_N(\cdot)$ tends to Dirac delta weakly in an oscillatory
way. As pointed out in \cite{filbet2011analysis}, the oscillation breaks the non-negativity of
the solution.

As mentioned earlier in Section \ref{sec:result}, we adopt the one-dimensional modified Jackson
filter \cite{meinardus1967approximation, weisse2006Filters} given by
\begin{equation}\label{eq:JacksonFilter2}
  {\sigma}_N(\beta) = \frac{(n+1-|\beta|)\cos(\frac{\pi|\beta|}{n+1})
    +\sin(\frac{\pi|\beta|}{n+1})\cot(\frac{\pi}{n+1})}{n+1},\quad
\end{equation}
where $N=2n+1$ and $-n \le \beta\le n$. By a slight abuse of notation, the $d$-dimensional modified
Jackson filter for a multiindex $k=(k_1,\ldots,k_d)\in\K$ is defined through tensor-product
\[
\sigma_N(k) =    \prod_{i=1}^d {\sigma}_N(k_i).
\]
The modified kernel $\chi^\sigma_N(v)$ can then be defined as
\[
\chi^\sigma_N(\bv) = \sum_{k\in\K}\sigma_N(k) E_k(\bv),\quad v\in\mD_T.
\]


Once $\chi_N$ is replaced with $\chi_N^{\sigma}$ in \eqref{eq:defG}, the function $G(\y,\z)$ is
substituted with $G^\sigma(\y,\z):=(G*(\chi^\sigma_N\otimes\chi^\sigma_N))(\y,\z)$. A direct
calculation shows that
\begin{small}
  \begin{equation}\label{eq:tildeG}
    \begin{aligned}
      G^\sigma(\y,\z)
      &=\frac{1}{N^{2d}} \int_{B_R}\int_{B_R}\tilde{\mB}(\y',\z')\delta(\y'\cdot\z')
      \chi_N^\sigma(\y-\y')\chi_N^\sigma(\z-\z')\dd\y'\dd\z'\\
      &=\frac{1}{N^{2d}}\sum_{l,m\in\K} \int_{B_R}\int_{B_R}\tilde{\mB}(\y',\z')\delta(\y'\cdot\z')
      \sigma_N(l)\sigma_N(m) E_l(\y-\y')E_m(\z-\z')
      \dd\y'\dd\z'\\
      &=\frac{1}{N^{2d}}\sum_{l,m\in\K}\left[\sigma_N(l)\sigma_N(m)
        \hat{B}_N(l,m)\right]E_{-l}(\y)E_{-m}(\z)\\
      &=\frac{1}{N^{2d}}\sum_{l,m\in\K}\hat{B}_N^{\sigma}(l,m) E_{-l}(\y)E_{-m}(\z),
    \end{aligned}
\end{equation}
\end{small}%
where $\hat{B}_N^{\sigma}(l,m):=\hat{B}_N(l,m)\sigma_N(l)\sigma_N(m)$ as defined in
\eqref{eq:Bsigma}. With $G^\sigma(\y,\z)\ge 0$ guaranteed, one can mimic \eqref{eq:A_G} and define
\begin{equation} \label{eq:tildetildeA}
  (A^\sigma)^{rs}_{pq}=\frac{1}{N^{2d}} \bone_N(r+s-p-q)G^\sigma(p-s,q-s),
  \quad p,q,r,s\in\X,
\end{equation}
which are apparently non-negative. Since replacing $\hat{B}_N(l,m)$ with $\hat{B}_N^{\sigma}(l,m)$ does
not affect the symmetry relation \eqref{eq:sym_B}, the new coefficients $(A^\sigma)^{rs}_{pq}$ also
satisfies $(A^\sigma)^{rs}_{pq}=(A^\sigma)^{rs}_{qp}=(A^\sigma)^{pq}_{rs}$.

From the above discussion, we now define the entropic collision term to be
\begin{equation}\label{eq:Qsk}
  \hat{Q}^\sigma_k := \sum_{l,m\in\K} \bone_N(l+m-k)
  \left(\hat{B}_N^\sigma(l,m)-\hat{B}_N^\sigma(m,m) \right)\hat{\F}_l\hat{\F}_m,
  \quad k\in\K
\end{equation}
in the Fourier domain. In the velocity domain, it is equal to
\begin{equation}\label{eq:Qsr}
  Q^\sigma_r := \sum_{p,q,s\in\X} (A^\sigma)^{rs}_{pq}\left(\F_p\F_q-\F_r\F_s \right),
  \quad r\in\X.
\end{equation}

\subsection{Initial condition}

In order to obtain the H-theorem, one needs to make sure that the initial data are
non-negative. Since the collision term in \eqref{eq:Qsr} depends only on the values at the
collocation points in $X$, it is sufficient to make the initial data non-negative at these
collocation points. Consequently, it is natural to use sampling rather than orthogonal projection
while preparing the discrete initial data $\F^0_r$ for $r\in\X$. More precisely,
\begin{equation}\label{eq:I_N}
    F^0_r = \mI_N f^0:=
    \begin{cases}
        \f^0(r), & f^0\;\;\text{is continuous},\\
        (\varphi^{\epsilon}*\f^0)(r), & \text{otherwise},
    \end{cases}
\end{equation}
where $\varphi^{\epsilon}\geq0$ is a mollifier, such that $\|\f^0 -
\varphi^{\epsilon}*\f^0\|_{L^2}<\epsilon$ for $\epsilon$ sufficiently small. Once $\{F^0_r\}$ are
ready, the corresponding Fourier coefficients $\{\hat{\F}^0_k\}$ are computed via a fast Fourier
transform.

At this point, all ingredients of the entropic Fourier method (EFM) are ready. The Cauchy problem of
EFM takes the following form as a DVM
\begin{equation}\label{eq:modifiedIVP}
  \left\{
  \begin{aligned}
    &\od{\F_r}{t} = Q^\sigma_r = \sum_{p,q,s\in\X} (A^\sigma)^{rs}_{pq}\left(\F_p\F_q-\F_r\F_s \right),\\
    &\F_r(t=0) = \F^0_r,
  \end{aligned}
  \right.
\end{equation}
Equivalently in the Fourier domain, EFM takes the form
\begin{equation}\label{eq:EFMk}
  \left\{  
  \begin{aligned}
    &\od{\hat{\F}_k}{t}=\hat{Q}^\sigma_k= \sum_{l,m\in\K} \bone_N(l+m-k)
    \left(\hat{B}_N^\sigma(l,m)-\hat{B}_N^\sigma(m,m)\right)\hat{\F}_l\hat{\F}_m,\\
    &\hat{\F}_k(t=0)=\hat{\F}^0_k.
  \end{aligned}
  \right.
\end{equation}

\begin{remark}
  The same technique can be applied to the Fourier method derived from the classical form of the
  Boltzmann collision operator \eqref{eq:collision} to obtain an entropic Fourier method.  In fact,
  from \eqref{eq:classical_hatB} and following the definition of $G(\y,\z)$ in \eqref{eq:G}, one can
  directly obtain
  \begin{equation*}
      G(\y,\z) = (H * (\chi_N\otimes\chi_N))(\y,\z),\quad
      H(\bg,\bg') = \mB(\bg,\omega)\delta(|\bg|-|\bg'|) \bone(|\bg|\leq R), \quad \bg,\bg'\in\mD_T.
  \end{equation*}
  Again, by \eqref{eq:A_G}, the positivity of $A_{pq}^{rs}$ depends only on the positivity
  of $G(\y, \z)$ at the collocation points. Therefore, replacing $\chi_N$ with $\chi_N^{\sigma}$
  does the job.
\end{remark}

\subsection{Comparison}\label{sec:birdview}

The derivation of EFM switched frequently between the language of Fourier methods and DVM for
different purposes. As we have shown in \eqref{eq:modifiedIVP} and \eqref{eq:EFMk}, EFM can be
regarded either as a Fourier method or as a special DVM.


In what follows, we provide a comparison between the entropic Fourier method (EFM), the FGM (Fourier
Galerkin method), and the Fourier collocation method (FCM). To set up a uniform notation, let
$\mathcal{Q}[\cdot;\cdot, \cdot]$ be the general collision operator
\begin{equation}
  \mathcal{Q}[C; f, f](\bv) = \int_{\mD_T} \int_{\mD_T} C(\y, \z) [f(\bv+\y) f(\bv+\z) - f(\bv)
    f(\bv+\y+\z)] \dd \y \dd \z
\end{equation}
with a collision kernel $C(\cdot, \cdot)$. Thus the truncated collision term
\eqref{eq:approximated_collision} can be written as $\mathcal{Q}(H; f, f)$ using the definition of
$H$ in \eqref{eq:Delta}. 

Notice that a special feature of a function in $\bbP_N$ is that it is uniquely defined via its
function values at points in $X$ defined in \eqref{eq:X}. Therefore, $\{\F_p|p\in\X\}$ can be
regarded both as discrete set of values or the samples from the smooth periodic $\F(v)\in \bbP_N
\subset L_{\mathrm{per}}^2(\mD_T)$.  By introducing two operators
\[
\mP_N: f \rightarrow \chi_N*f,\quad
\mS_N^\sigma: f \rightarrow \chi_N^\sigma * f
\]
for the space $L_{\mathrm{per}}^2(\mD_T)$, the three methods are different approximations of
$\mathcal{Q}(H; f,f)$ with different initial values: 

\begin{align}
  \label{eq:FGM1}
  \text{FGM: } &
  \mP_N \mathcal{Q}[(\mP_N\otimes\mP_N) H; F, F],
  & &F(t=0,\bv)=\mP_N \f(t=0,\bv), \\
  \label{eq:FGM3}
  \text{FCM: } &
  \underline{\mI_N} \mathcal{Q}[(\mP_N\otimes\mP_N) H; F, F],
  & & F(t=0,\bv)=\mP_N \f(t=0,\bv), \\
  \label{eq:FGM5}
  \text{EFM: } &
  \mI_N \mathcal{Q}[\underline{(\mS_N^{\sigma}\otimes\mS_N^\sigma)} H; F, F],
  & & F(t=0,\bv)=\underline{\mI_N} \f(t=0,\bv).
\end{align}
where $\mI_N$ is the interpolation operator defined in 
\eqref{eq:I_N}. 


The list \eqref{eq:FGM1} to \eqref{eq:FGM5} clearly shows how we change from FGM to EFM in our
derivation. The last line \eqref{eq:FGM5} also shows that EFM provides an approximation of the
original binary collision operator in the language of spectral methods. Below we will briefly review
the basic properties of all the three methods.

The method \eqref{eq:FGM1} stands for FGM as described in Section \ref{sec:FGM}. In the
derivation, the kernel $K$ is not explicitly projected. However, \eqref{eq:FGM_Q} shows that the
discrete collision operator depends only on $(\mP_N\times\mP_N) H$. By replacing the projection
operator applied to $\mathcal{Q}$ with interpolation as in \eqref{eq:FCM}, we arrive at the Fourier
collocation method \eqref{eq:FGM3} introduced in Section \ref{sec:sym_FCM}.  Since a direct
projection of $H$ does not preserve the positivity of the kernel, the negative part of the discrete
kernel may cause a violation of the H-theorem.  Nevertheless, both these two methods have spectral
accuracy in the velocity space.

To ensure the positivity of the discrete kernel, the filter $\mS_N^{\sigma}\otimes\mS_N^{\sigma}$ is
applied in \eqref{eq:FGM5}, and thus positive coefficients \eqref{eq:tildetildeA} are obtained. The
method \eqref{eq:FGM5} also ensures the positivity of the approximation of $\F$ at collocation
points, and thus the discrete H-theorem follows.

However, the filter $S_N^{\sigma}$ has a smearing effect, which reduces the order of convergence.
For any smooth periodic function $f\in L_{\mathrm{per}}^2(\mD_T)$, the $L^2$-error
$\|f-\mS_N^{\sigma}f\|_2$ is $O(N^{-2})$ \cite[Chapter 4]{lorentz1966approximation}, and therefore EFM is at most
second-order. On the other hand, if one splits the collision term of EFM into the gain part and the
loss part and let $\F=\mP_N f$, then
\begin{align}
  Q^{\sigma,+}[\F,\F](r) &= \sum_{l,m\in K}\hat{B}_N(l,m)  \sigma_N(l)\hat{\F}_l  \sigma_N(m)\hat{\F}_m 
  E_{l+m}(r) = Q^{\C,+}[\mS_N^\sigma \F, \mS_N^\sigma \F](r),\\
  Q^{\sigma,-}[\F,\F](r) &= \sum_{l,m\in K}\hat{B}_N(m,m)\hat{\F}_l \sigma_N^2(m) \hat{\F}_m 
  E_{l+m}(r) = Q^{\C,-}[\F, \mS_N^\sigma\mS_N^\sigma \F](r),
\end{align}
for any $r\in\X$. Following the boundedness of the truncated collision operator proven in
\cite{pareschi2000Spectral}, one concludes that
\[
\|Q^\sigma[\F,\F]-Q^\C[\F,\F]\|_2\leq\|Q^{\sigma,+}[F,F]-Q^{\C,+}[\F,\F]\|_2+\|Q^{\sigma,-}[\F,\F]-Q^{\C,-}[\F,\F]\|_2\leq
O(N^{-2}).
\]
Hence, EFM has second-order accuracy in approximating the truncated collision operator, if the
distribution is smooth enough (due to the smoothing effect of the Boltzmann collision operator
\cite{Barbaroux2017}, we only the initial value is smooth enough). This order of convergence will be
also numerically verified in the next section.

\subsection{Proof of Theorem \ref{thm:EFM}}\label{sec:proof}
This subsection provides the proof of Theorem \ref{thm:EFM}.
\begin{proof}[Proof of Theorem \ref{thm:EFM}]
  The argument in Section \ref{sec:SymFGM2} indicates that if $\f(t=0,\bv)\geq0$, then the
  coefficients $A_{pq}^{rs}$ and the initial values $\F^0_r$ satisfy all the three conditions in
  Condition \ref{cond:DVM}.
  
  The symmetry relation \eqref{eq:sym_B} of $\hat{B}(l,m)$ and the definition of
  $\hat{B}_N^\sigma(l,m)$ indicates $\bone_N(l+m)\left(
  \hat{B}_N^\sigma(l,m)-\hat{B}_N^\sigma(m,m)\right)=0$, i.e. $Q^\sigma_{0}=0$.  Noticing that the zero
  frequency $\hat{\F}_0(t)=\frac{1}{N^d}\sum_{r\in X}\F_r(t)$, one can directly obtain the
  conservation of mass \eqref{eq:conservationMass}.

  Since $\f^0(v)\geq 0$, $\F^0_r \geq 0$ by construction. If there exists $t'>0$ and $r\in\X$ such
  that $\F_r(t')=0$ and $\F_p(t')\geq0$ for any other $p\in\X$, then
  \begin{equation*}
    \od{F_r(t)}{t}\mid_{t=t'}=\sum_{p,q,s\in X}(A^\sigma)_{pq}^{rs}\F_p\F_q
    \geq0,
  \end{equation*}
  which indicates $\F_r(t)\geq0$ for all $t>0$ and $r\in\X$.
    
  The symmetry relation and non-negativity of $(A^\sigma)_{pq}^{rs}$ indicate the discrete H-theorem
  \eqref{eq:Htheorem}
  \begin{equation*}
    \sum_{r\in X}Q^\sigma_r \ln(\F_r) = \frac{1}{4} \sum_{p,q,r,s\in X} (A^\sigma)_{pq}^{rs}\ln\left(
    \frac{\F_r\F_s}{\F_p\F_q} \right) [\F_p\F_q-\F_r\F_s]\leq0.
  \end{equation*}
  This completes the proof.
\end{proof}

\section{Numerical tests}\label{sec:numerical}
This section describes several numerical tests to demonstrate the properties of EFM and to compare
with the Fourier Galerkin method (FGM) in \cite{pareschi2000Spectral} and the positivity preserving
spectral method (PPSM) in \cite{pareschi2000stability}.

\subsection{Implementation}\label{sec:implementation}
It is pointed out in Section \ref{sec:result} that the fast algorithms in \cite{Mouhot, Hu2016} can
be applied to EFM without affecting the H-theorem. To show this, one needs to check that the fast
algorithms do not violate the first two conditions in Condition \ref{cond:DVM}.

These fast algorithms are based on an approximation of $\hat{B}_N(l,m)$
\eqref{eq:low_rank_approx_filtered} of the following form.
\begin{equation}\label{eq:fastAlg}
  \hat{B}_{N}(l,m) \approx
  \hat{B}_{N,\fast}(l,m):=\sum_{t=1}^M \alpha_{l+m}^t \beta_{l}^t \gamma_{m}^t.
\end{equation}
After the filtering is applied, one obtains a similar approximation for $\hat{B}_N^{\sigma}(l,m)$
\begin{equation}
  \hat{B}^{\sigma}_N(l,m)\approx 
  \hat{B}^{\sigma}_{N,\fast}(l,m) :=\sum_{t=1}^M \alpha_{l+m}^t \left(\sigma_N(l)\beta_l^t\right)
  \left(\sigma_N(m)\gamma_m^t\right).
\end{equation}
It can be verified that both kernels satisfy the symmetry relation
\begin{eqnarray}\label{eq:symmetryBfast}
  &\hat{B}_{N,\fast}(l,m)= \hat{B}_{N,\fast}(m,l)= \hat{B}_{N,\fast}(-l,m), \\
  &\hat{B}_{N,\fast}^{\sigma}(l,m)= \hat{B}_{N,\fast}^{\sigma}(m,l)= \hat{B}_{N,\fast}^{\sigma}(-l,m),
\end{eqnarray}
which indicates Condition \ref{cond:DVM}.\ref{it:symmetry} is valid for the fast algorithms.

To see that the fast algorithms do not affect the non-negativity of $G^\sigma(\y,\z)$, we use
the fast algorithm in \cite{Mouhot} with $d=2$ and $\tilde{B}=1$ as an example. The first step of
this algorithm writes $\y$ and $\z$ in \eqref{eq:def_hatB} in the polar coordinates $\y=\rho\be$
and $\z=\rho_*\be_*$:
\begin{equation}\label{eq:spherical_hatB}
  \hat{B}_N(l,m)=\frac{1}{4}
    \int_{\bbS^1}\int_{\bbS^1}\delta(\be\cdot\be_*)
    \left[ \int_{-R}^RE_{l}(\rho\be)\dd\rho \right]
    \left[ \int_{-R}^RE_{m}(\rho_*\be_*)\dd\rho_* \right]
    \dd\be\dd\be_*.
\end{equation}
Let $\psi_R(l,\be)=\int_{-R}^RE_{l}(\rho\be)\dd\rho$, then
\[
\hat{B}_N(l,m)=\frac{1}{4}\int_{\bbS^1}\int_{\bbS^1}\delta(\be\cdot\be_*)
\psi_R(l,\be)\psi_R(m,\be_*)\dd \be\dd\be_*.
\]
Integrating it with respect to $\be_*$ yields
\begin{equation}\label{eq:fast_hatB}
  \hat{B}_N(l,m) =\int_0^{\pi}\psi_R(l,\be_\theta)\psi_R(m,\be_{\theta+\pi/2})
  \dd\theta.
\end{equation}
Substituting \eqref{eq:spherical_hatB} into \eqref{eq:tildeG} gives rise to
\begin{equation}\label{eq:spherical_tildeG}
  G^\sigma(\y,\z)= \frac{1}{4}
  \int_{\bbS^1}\int_{\bbS^1}\delta(\be\cdot\be_*)
  \left[ \int_{-R}^R\chi_N^\sigma(\rho\be  -\y)\dd\rho \right]
  \left[ \int_{-R}^R\chi_N^\sigma(\rho\be_*-\z)\dd\rho_* \right]
  \dd\be\dd\be_*.
\end{equation}
Let $\phi^\sigma_R(\y,\be)=\int_{-R}^R\chi_N^\sigma(\rho\be-\y)\dd\rho$.  Apparently,
$\phi^\sigma_R(\y,\be)\geq0$ due to $\chi_N^\sigma(\y)\geq0$ for any $\y\in\bbR^2$.  Then integrating
\eqref{eq:spherical_tildeG} with respect to $\be_*$ yields
\begin{equation}
  G^\sigma(\y,\z)
  =\int_0^{\pi}\phi^\sigma_R(\y,\be_\theta)\phi^\sigma_R(\z,\be_{\theta+\pi/2})
  \dd\theta.
\end{equation}

The idea of the fast algorithm is to replace the integration in
\eqref{eq:fast_hatB} with a quadrature formula. More precisely
\eqref{eq:fast_hatB} is approximated by
\begin{equation}\label{eq:discrete_hatB}
    \hat{B}_{N,\fast}(l,m) =
    \sum_{t=1}^M\frac{\pi}{M}\psi_R(l,\be_{\theta_t})\psi_R(m,\be_{\theta_t+\pi/2}).
\end{equation}
Similarly to \eqref{eq:discrete_hatB}, one obtains
\begin{equation}
  G^\sigma_\fast(\y,\z) = 
  \sum_{t=1}^M\frac{\pi}{M}\phi^\sigma_R(\y,\be_{\theta_t})\phi^\sigma_R(\z,\be_{\theta_t+\pi/2}).
\end{equation}
Since $\phi^\sigma_R(\y,\be)\geq0$ for any $\y\in\bbR^2$, $\be\in\bbS^1$,
$G^\sigma_\fast(\y,\z)\geq0$ for any $\y,\z\in\bbR^2$.  Hence, the fast algorithm does not
destroy the non-negativity of $G^\sigma_\fast(\y,\z)$.

As we pointed out in Section \ref{sec:sym_FCM}, an aliased convolution can be directly used to
calculate \eqref{eq:EFM}. Since the accuracy of EFM is only second order, the smoothing filter is
the main source of the error.  In the fast algorithms, the number $M$ in \eqref{eq:fastAlg} perhaps
can be smaller than that in \cite{Mouhot, Hu2016}.

In the above discussion, we only study the case when $N$ is odd. The case of even $N$ values can be reduced to
the odd $(N-1)$ case by setting the coefficient of a mode $k$ to be zero $0$ if any component of
$k=(k_1,\ldots,k_d)$ is equal to $-N/2$.

For the time discretization, the third-order strong stability-preserving Runge-Kutta method proposed
in \cite{gottlieb2001strong} is employed in the discretization of time.  In all tests, the time step
is chosen as $\Delta t = 0.01$.

\subsection{Numerical results}\label{sec:numericalresult}
The test problems used here are solutions of the space-homogeneous Boltzmann equation for Maxwell
molecules ($\mB(\bg,\omega)=\frac{1}{2\pi}$ in 2D and $\mB(\bg,\omega)=\frac{1}{4\pi}$ in 3D).

\paragraph{Example 1 ($2D$ BKW solution).}
The first example is the well-known $2D$ BKW solution, obtained independently in
\cite{bobylev1975exact} and \cite{krook1977exact}.  The exact solution takes the form
\begin{equation}
  \f(t, \bv)=\frac{1}{2\pi S}\exp\left( -\frac{|\bv|^2}{2S} \right)
  \left( \frac{2S-1}{S}+\frac{1-S}{2S^2}|\bv|^2 \right),
\end{equation}
where $S=1-\exp(-t/8)/2$. The BKW solution allows one to check the accuracy, positivity of the
solution, and the entropy of the proposed method. Here we set the truncation radius $R=6$ in the
tests.

\begin{figure}[ht]
    \subfigure[$N=16$]{%
        \begin{overpic}[width=.48\textwidth,clip]{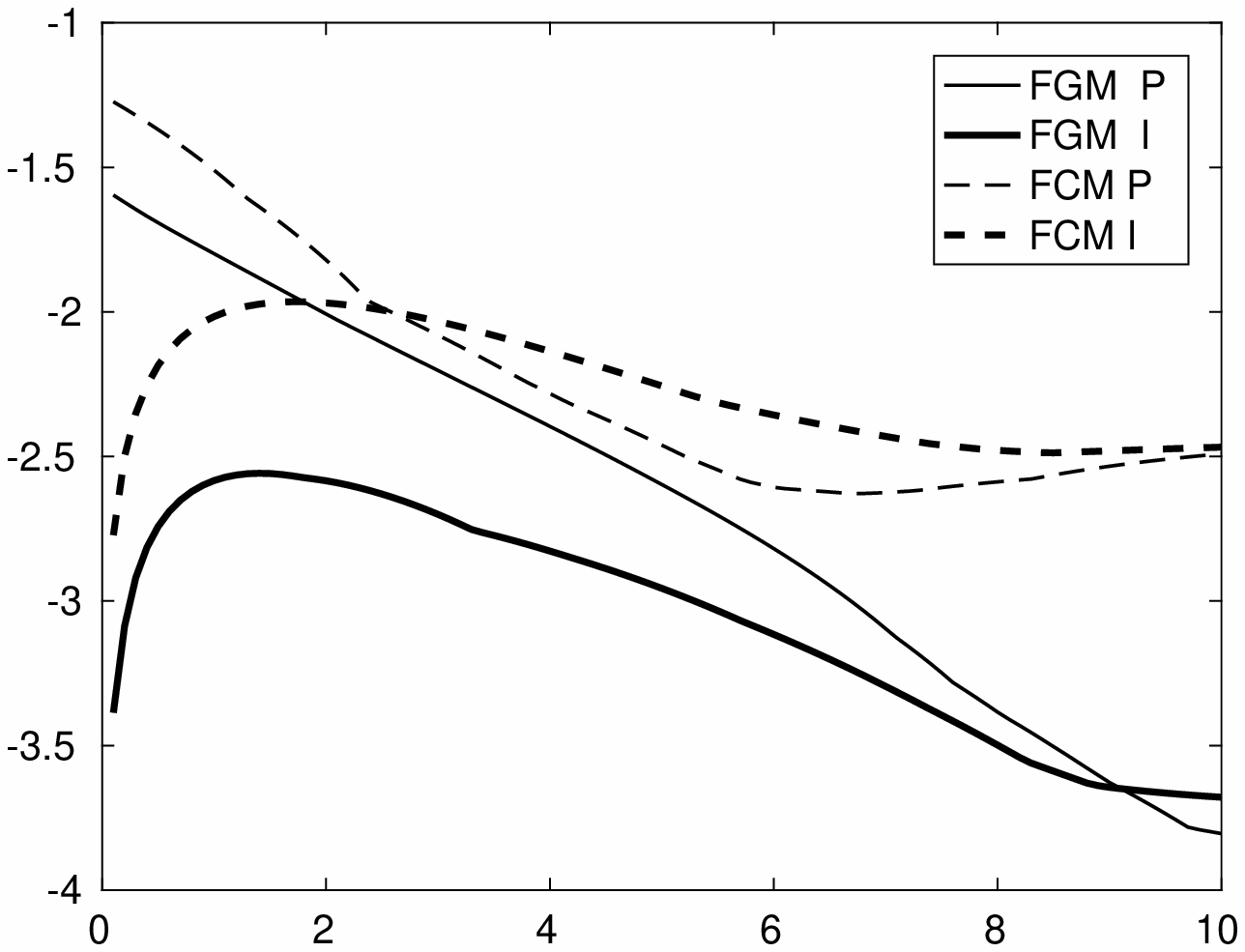}
            \put(51,0){$t$}
            \put(-4,30){\small\rotatebox{90}{$\log_{10}(\epsilon)$}}
        \end{overpic}
    }\qquad
    \subfigure[$N=32$]{%
        \begin{overpic}[width=.48\textwidth,clip]{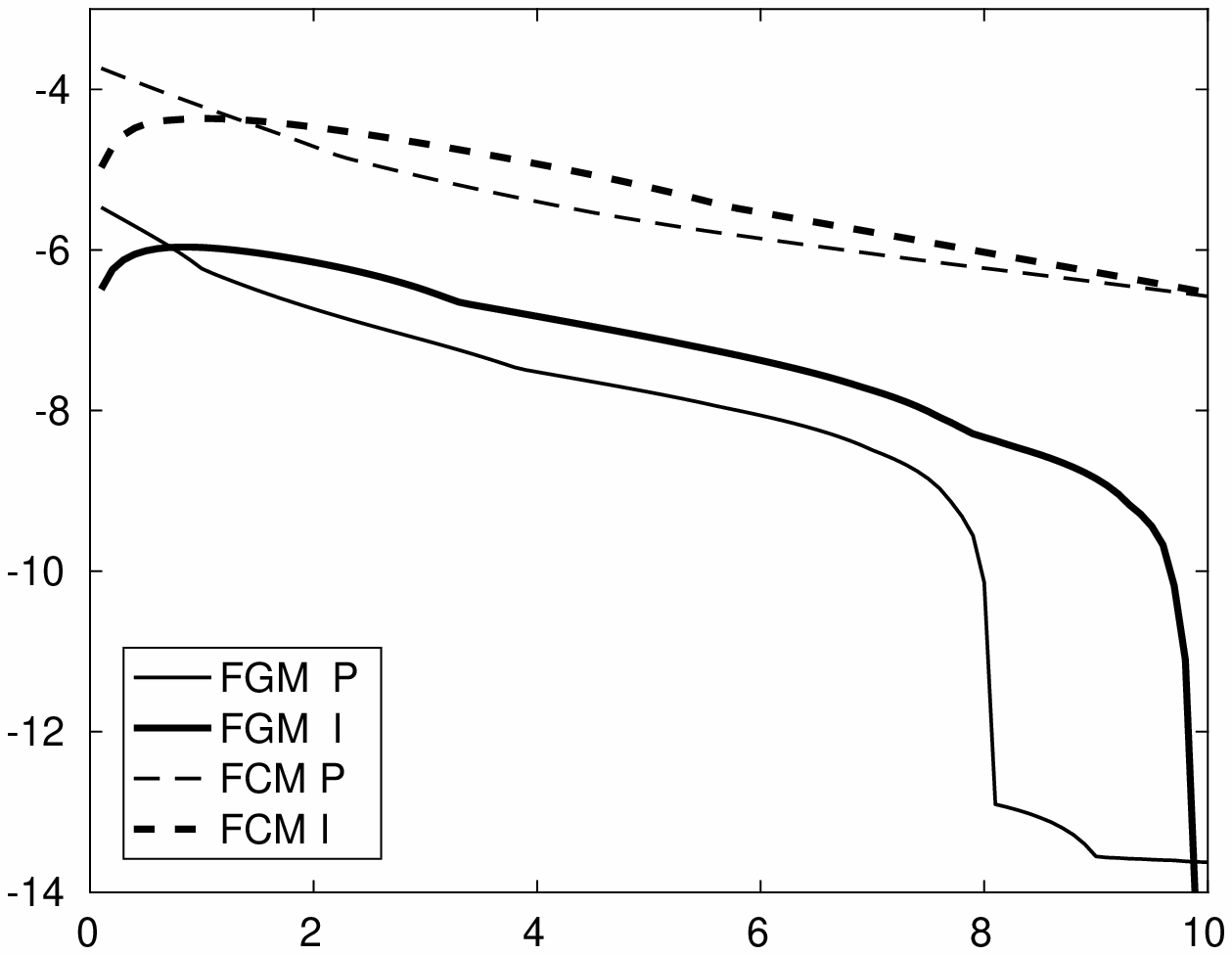}
            \put(51,0){$t$}
            \put(-4,30){\small\rotatebox{90}{$\log_{10}(\epsilon)$}}
        \end{overpic}
    }
    \caption{\label{fig:positiveError} Positivity error of FGM and FCM with the initial value
      prepared by orthogonal projection (P) and interpolation (I) in the $\log_{10}$ scale. Since
      the positivity error of EFM is strictly zero, its result is not plotted in the figure.}
\end{figure}

Both FGM \eqref{eq:FG} or FCM \eqref{eq:FCM} result in good approximations of the exact solution
at the collocation points. However, the solutions are not non-negative. Even if we use interpolation
rather than orthogonal projection to prepare the initial values, the solutions of these methods
still fail to be non-negative. The positivity of the solution is measured by the positivity error
defined via
\begin{equation}
  \epsilon := \frac{\sum_{q\in\X}|\F_q|-\sum_{q\in\X}\F_q}{\sum_{q\in\X}|\F_q|}.
\end{equation}
Figure \ref{fig:positiveError} shows that both FGM and FCM fail to preserve the positivity of the
solution at the collocation points no matter whether the initial value is given by orthogonal
projection or interpolation. On the other hand, the positivity error of EFM
is strictly zero, thanks to the modification in \eqref{eq:tildeG}.

Table \ref{tab:accuracyBKW} summarizes the $\ell_1$, $\ell_2$ and $\ell_{\infty}$ errors of
\eqref{eq:modifiedIVP} at time $t=0.01$. Here the $\ell_\mathrm{p}$ relative errors for
$\mathrm{p}=1,2,\infty$ are defined by
\begin{equation}
    \frac{\|\F-\f\|_\mathrm{p}}{\|\f\|_\mathrm{p}} = \frac{\left(\sum_{q\in\X}
    |\F_q-\f(q)|^\mathrm{p}\right)^{1/\mathrm{p}} }
    {\left(\sum_{q\in\X} |\f(q)|^\mathrm{p}\right)^{1/\mathrm{p}}},
\end{equation}
where $\f(q)$ is the exact solution at $q\in\X$. The numerical results also show that the
convergence rate of EFM is of second order.

\begin{table}[!ht]
  \centering
  \begin{tabular}{|c|c|c|c|c|c|c|}
    \hline
    $N$ & $\ell_1$ error  & rate & $\ell_2$ error  & rate & $\ell_{\infty}$ error & rate \\ \hline
    16  & $4.68\times10^{-3}$ &     & $3.23\times10^{-3}$ &     & $3.12\times10^{-3}$&     \\ \hline 
    32  & $1.72\times10^{-3}$ & 1.44& $1.36\times10^{-3}$ & 1.25& $1.40\times10^{-3}$& 1.15\\ \hline
    64  & $5.54\times10^{-4}$ & 1.64& $4.56\times10^{-4}$ & 1.58& $5.57\times10^{-4}$& 1.34\\ \hline
    128 & $1.55\times10^{-4}$ & 1.84& $1.29\times10^{-4}$ & 1.82& $1.73\times10^{-4}$& 1.68\\ \hline
    256 & $4.05\times10^{-5}$ & 1.93& $3.42\times10^{-5}$ & 1.92& $4.73\times10^{-5}$& 1.87\\ \hline
    512 & $1.03\times10^{-5}$ & 1.97& $8.76\times10^{-6}$ & 1.96& $1.22\times10^{-5}$& 1.94\\ \hline
  \end{tabular}
  \caption{\label{tab:accuracyBKW}The $\ell_1$, $\ell_2$ and $\ell_\infty$ errors and convergence
    rates for the BKW solution at time $t=0.01$ with $R=6$. }
\end{table}

As discussed in Section \ref{sec:implementation}, the fast algorithm in \cite{Mouhot} can be applied
to EFM to accelerate the computation. In \eqref{eq:fast_hatB}, the integration on $[0,\pi)$ can be
  reduced to $[0,\pi/2)$ and $M$ is equal to the number of samples within $[0,\pi/2)$. Table
      \ref{tab:fastError} presents the $\ell_1$ error for multiple values of $M$, noticing that
      $M=2$ is good enough in practice while in \cite{Mouhot} the authors suggest $M\geq4$.
\begin{table}[!ht]
  \centering
  \begin{tabular}{|c|c|c|c|}
    \hline
    $N$ & $M=2$ & $M=3$ & $M=32$ \\\hline
    16  & $4.6852\times10^{-3}$ &   $4.6826\times10^{-3}$ &   $4.6830\times10^{-3}$ \\ \hline   
    32  & $1.7241\times10^{-3}$ &   $1.7244\times10^{-3}$ &   $1.7245\times10^{-3}$ \\ \hline
    64  & $5.5368\times10^{-4}$ &   $5.5388\times10^{-4}$ &   $5.5394\times10^{-4}$ \\ \hline
    128 & $1.5485\times10^{-4}$ &   $1.5488\times10^{-4}$ &   $1.5489\times10^{-4}$ \\ \hline
    256 & $4.0513\times10^{-5}$ &   $4.0516\times10^{-5}$ &   $4.0517\times10^{-5}$ \\ \hline
  \end{tabular}
  \caption{\label{tab:fastError}The $\ell_1$ error of EFM with fast algorithm in \cite{Mouhot} for
    multiple choices of $N$ and $M$.}
\end{table}

\begin{figure}[!ht]
  \centering
  \includegraphics[width=0.5\textwidth,clip]{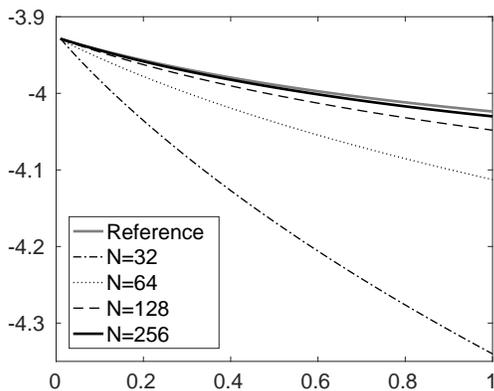}
  \caption{\label{fig:Entropy}The evolution of the entropy of EFM for multiple $N$ values.}
\end{figure}

In order to demonstrate that the proposed method satisfies the H-theorem numerically, we define a
time-dependent discrete entropy function
\begin{equation}
  \eta(t)=\left(\frac{2T}{N}\right)^d\sum_{q\in\X}\F_q(t) \ln\F_q(t).
\end{equation}
The evolution of the entropy, plotted in Figure \ref{fig:Entropy}, shows that as the number of the
discrete points $N$ increases the discrete entropy converges to the one of the exact solution.

\begin{figure}[!ht]
  \centering
  \includegraphics[width=0.5\textwidth,clip]{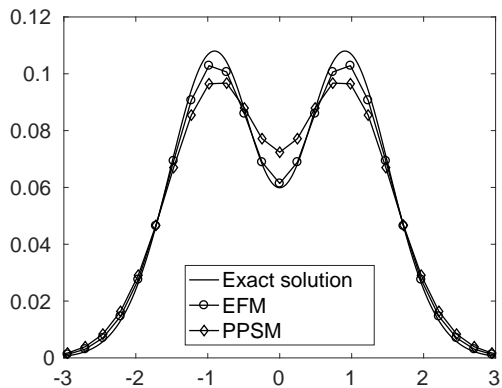}
  \caption{\label{fig:PositiveVSHtheorem} Comparison between PPSM and EFM at time $t=1$ with $N=64$
    for BKW solution.}
\end{figure}

As a comparison with PPSM, Figure \ref{fig:PositiveVSHtheorem} presents the numerical solutions in
the $v_1$ direction of PPSM and EFM at $t=1$ with $N=32$. The smoothing filter used for EFM results
in much less dissipation, thus leading to a much better agreement with the exact solution. Finally,
Figure \ref{fig:BKWN} shows that as $N$ increases the solution of EFM converges rapidly to the exact
solution.

\begin{figure}[!ht]
    \centering
    \subfigure{%
      \includegraphics[width=0.48\textwidth,clip]{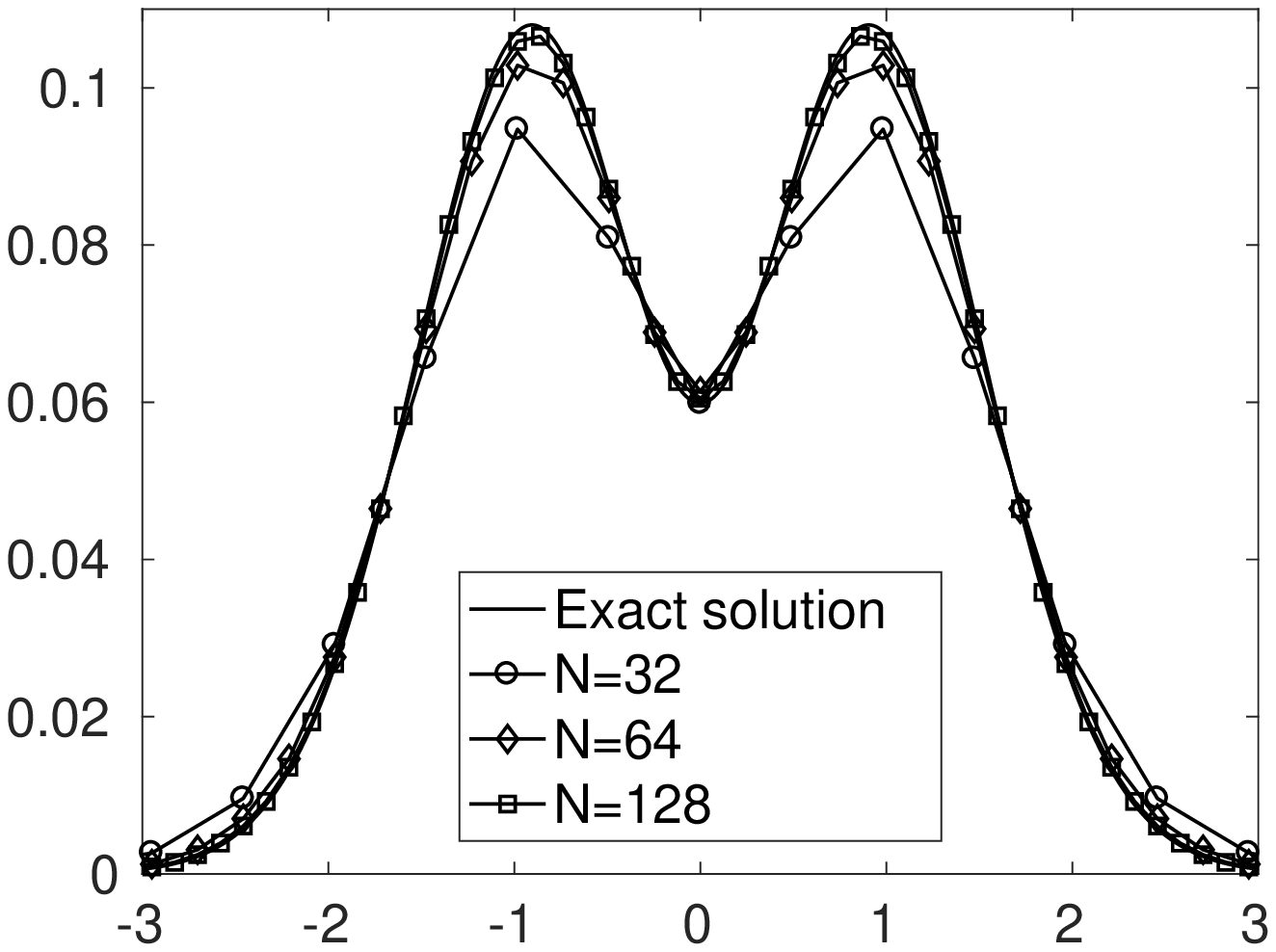}\qquad
    }
    \subfigure{%
      \includegraphics[width=0.45\textwidth,clip]{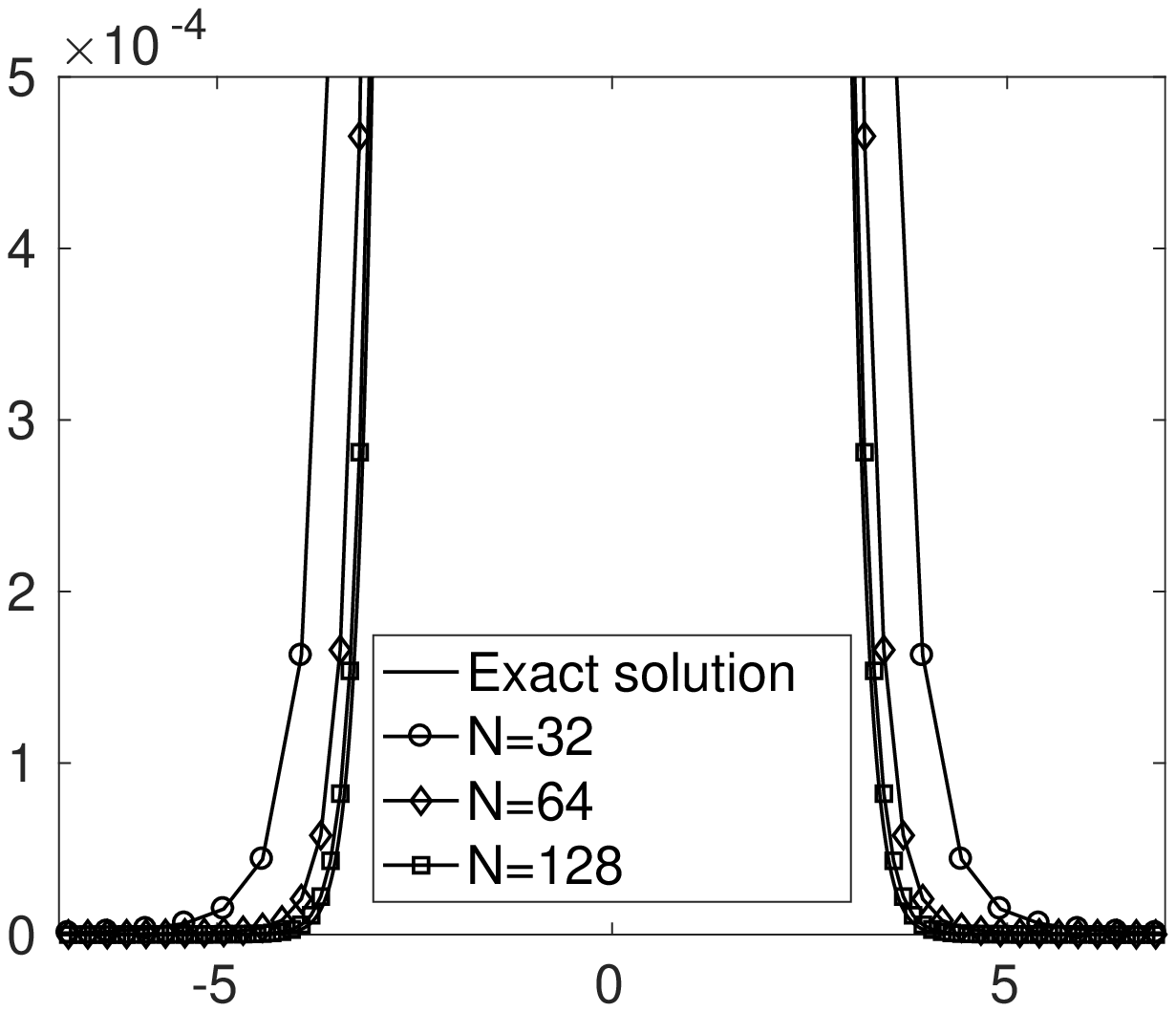}
    }
    \caption{\label{fig:BKWN}Numerical solution of EFM for multiple $N$ values with the BKW solution
      at time $t=1$ on different scales.}
\end{figure}

\paragraph{Example 2 (Bi-Gaussian initial value).}  
Another frequent example is a problem with the bi-Gaussian initial value
\begin{equation}
  \f(t=0,\bv) = \frac{1}{4\pi}\left( 
  \exp\left( -\frac{|\bv-\bu_1|^2}{2} \right) 
  +\exp\left( -\frac{|\bv-\bu_2|^2}{2} \right) 
  \right),\quad 
\end{equation}
where $\bu_1=(-2,0)^\T$ and $\bu_2=(2,0)^\T$. This is solved for the Maxwell molecules (2D in
velocity) with radius $R=6$. Figure \ref{fig:PositiveVSHtheoremBiG} shows the numerical results of
PPSM and EFM. The reference solution is calculated by the Fourier spectral method with $N=400$ and
$R=8$. It is clear that EFM solution is much closer to the reference solution. Figure
\ref{fig:BiGN} demonstrates that as $N$ increases EFM converges rapidly to the reference solution.

\begin{figure}[ht]
  \centering
  \includegraphics[width=0.5\textwidth,clip]{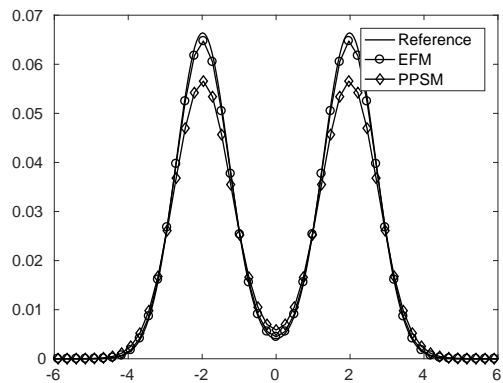}
  \caption{\label{fig:PositiveVSHtheoremBiG}Comparison between PPSM and EFM at $t=1$ with $N=64$ for
  the bi-Gaussian initial value.}
\end{figure}

\begin{figure}[!ht]
    \centering
    \subfigure{%
    \includegraphics[width=0.48\textwidth,clip]{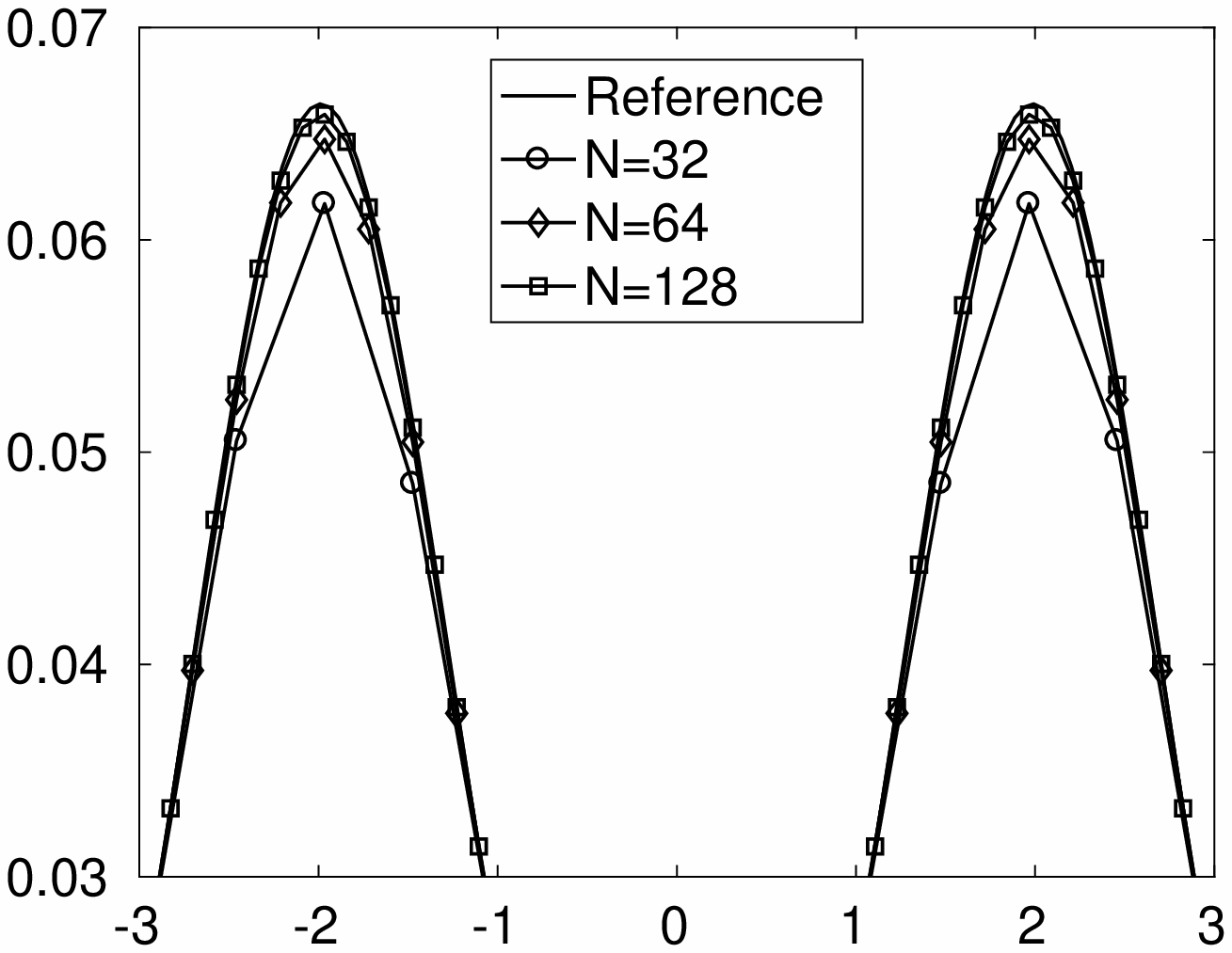}
    }\qquad
    \subfigure{%
    \includegraphics[width=0.45\textwidth,clip]{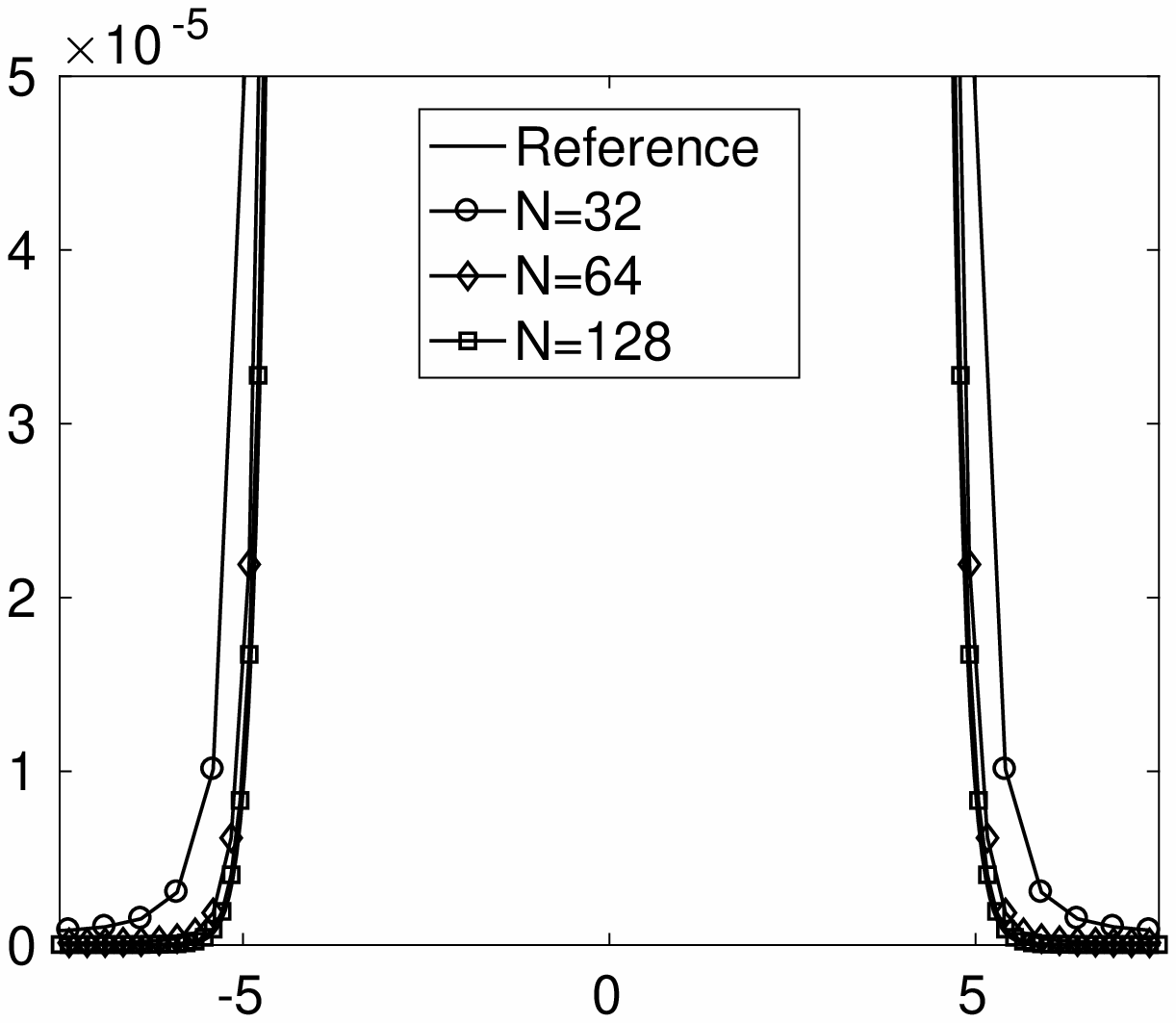}
    }
    \caption{\label{fig:BiGN}Numerical solution of EFM for multiple $N$ values with the bi-Gaussian
      initial value at time $t=1$ on different scales.}
\end{figure}

\paragraph{Example 3 (Discontinuous initial value).}
The initial condition given by
\begin{equation}\label{eq:discontinuous}
  \f(t=0,\bv)=\left\{ \begin{array}{ll}
    \frac{\rho_1}{2\pi T1}\exp\left( -\frac{|\bv|^2}{2T_1}
    \right),    & \text{ if }v_1 > 0,\\
    \frac{\rho_2}{2\pi T2}\exp\left( -\frac{|\bv|^2}{2T_2}
    \right),    & \text{ if } v_1 < 0,
  \end{array} \right.
\end{equation}
in this example is discontinuous. Here $\rho_1=\frac{6}{5}$ and the values of $\rho_2$, $T_1$ and
$T_2$ are uniquely determined by the three conditions
\[
\int_{\bbR^2}f(0,v)\dd\bv=\int_{\bbR^2}f(0,v)|\bv|^2/2\dd\bv=1, \quad 
\int_{\bbR^2}f(0,v)\bv\dd\bv=0.
\]

\begin{figure}[!ht]
    \subfigure[Profile of $\F(t=0.5,v_1,v_2=0)$]{
        \includegraphics[width=0.45\textwidth,clip]{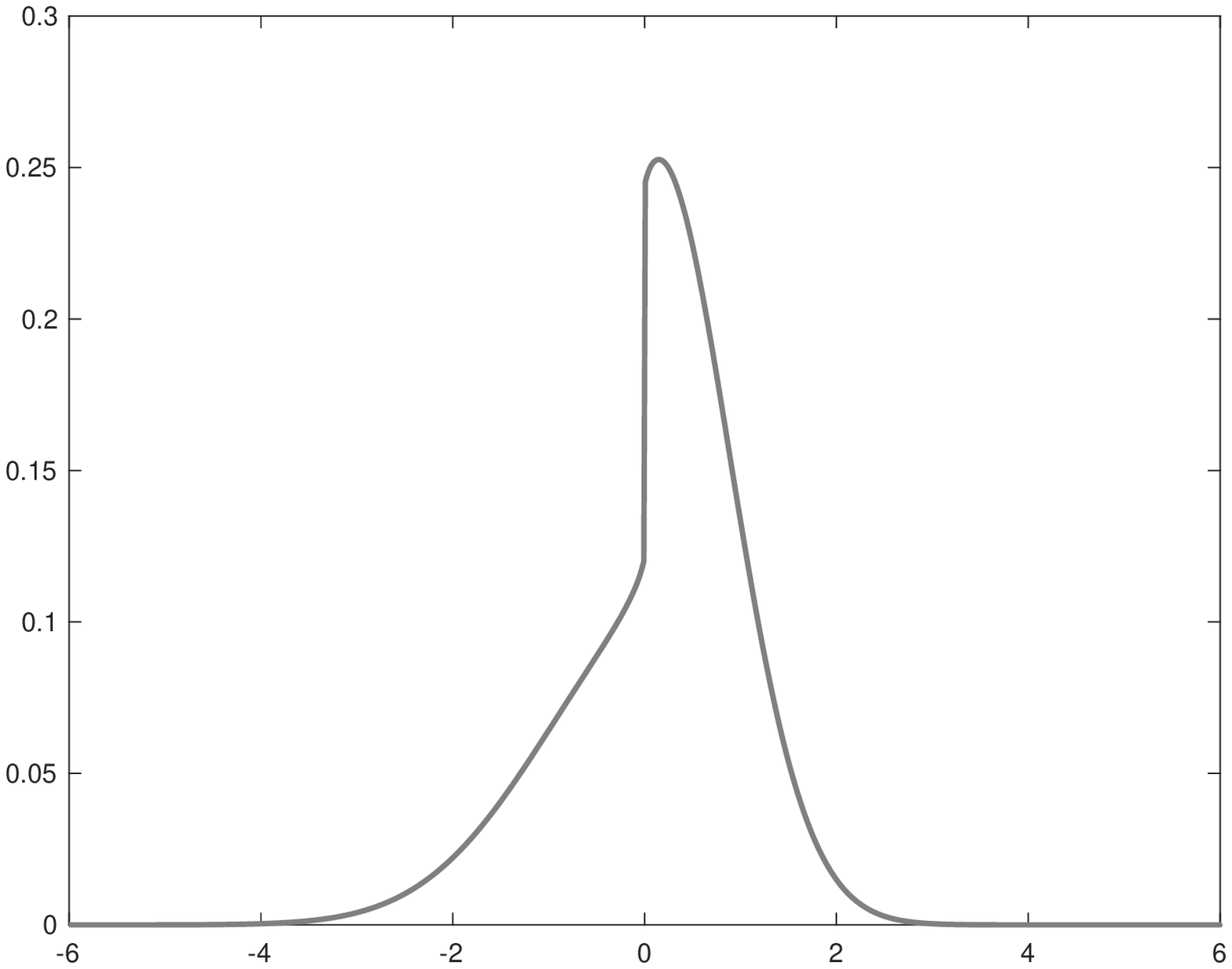}
    }
    \subfigure[Profile of $\F(t=0.5,\bv)$]{
        \includegraphics[width=0.5\textwidth,clip]{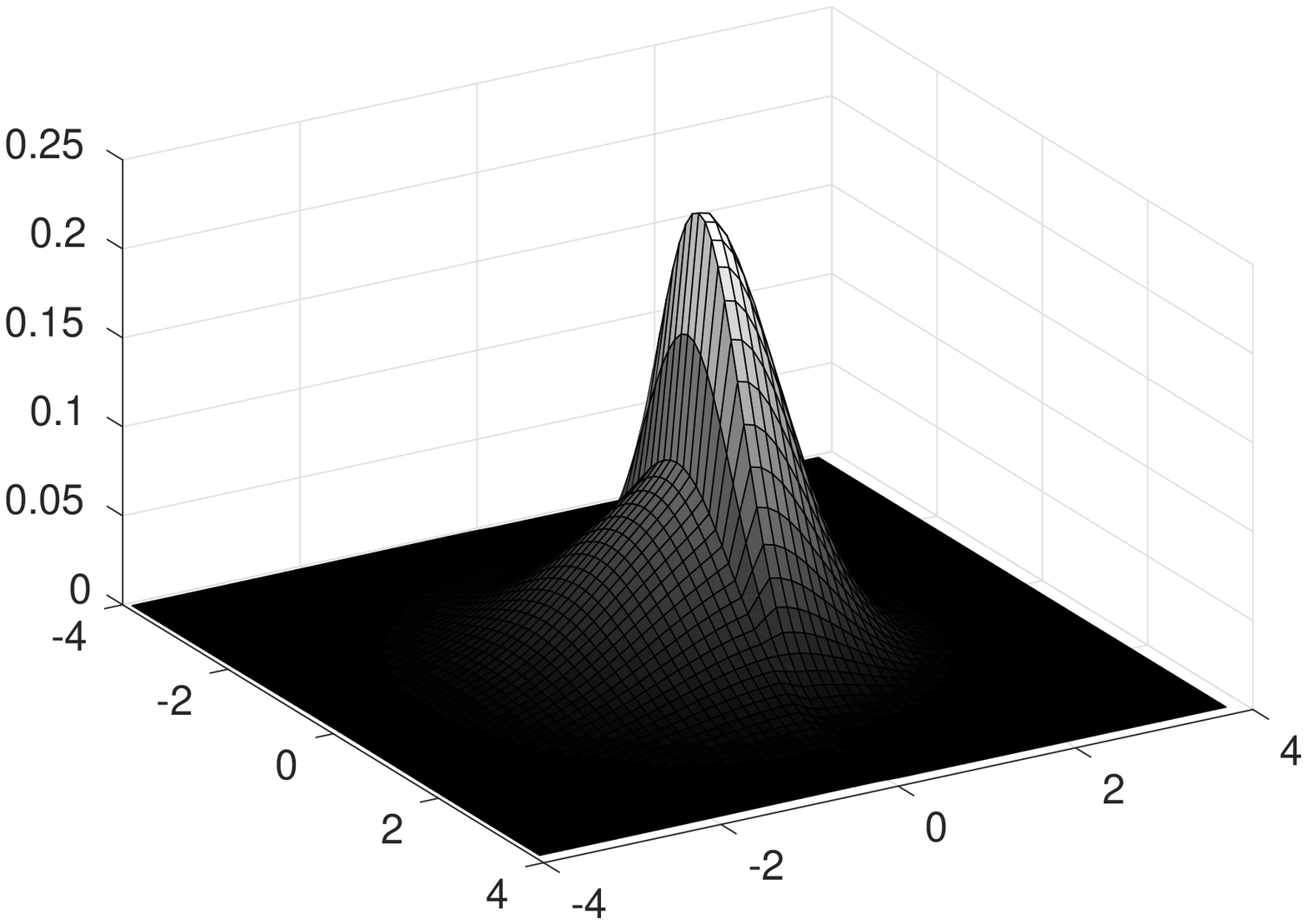}
    }
    \caption{\label{fig:Disc2D}Profile of $\F(t,\bv)$ with the discontinuous initial value
      \eqref{eq:discontinuous} at time $t=0.5$.}
\end{figure}
\newcommand\addImagesDIsc[2]{
    \subfigure[#1]{%
        \includegraphics[width=0.3\textwidth,clip]{#2.eps}
    }%
}
\begin{figure}[!ht]
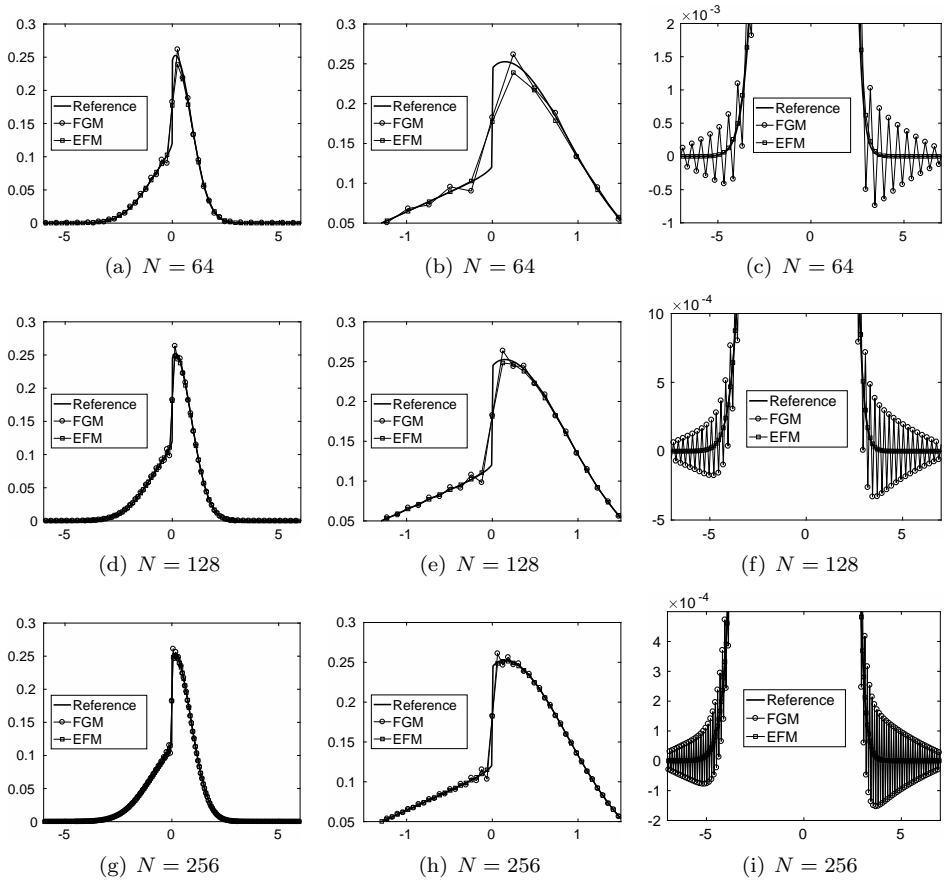

    \centering
    \addImagesDIsc{$N=64$}{Disc64}
    \addImagesDIsc{$N=64$}{Disc64peak}
    \addImagesDIsc{$N=64$}{Disc64small}\\
    \addImagesDIsc{$N=128$}{Disc128}
    \addImagesDIsc{$N=128$}{Disc128peak}
    \addImagesDIsc{$N=128$}{Disc128small}\\
    \addImagesDIsc{$N=256$}{Disc256}
    \addImagesDIsc{$N=256$}{Disc256peak}
    \addImagesDIsc{$N=256$}{Disc256small}
    \caption{\label{fig:disc}Numerical solution of EFM and FGM for multiple $N$ values with the
      discontinuous initial value \eqref{eq:discontinuous} at time $t=0.5$ on different scale.}
\end{figure}

The profile of the reference solution is presented in Figure \ref{fig:Disc2D}, which is computed by
EFM with $N=2048$. Due to the discontinuity in the initial value, the spectral accuracy of FGM is
lost. In addition, the Gibbs phenomenon leads to oscillations in the initial value of FGM.  In
Figure \ref{fig:disc}, the plots around the discontinuity demonstrate that EFM has a much better
agreement as compared to FGM. The oscillations in FGM solutions exhibit large errors and the
amplitude of the oscillation decreases slowly as $N$ increases. On the contrary, there is no
oscillation for EFM around the discontinuity and the solution is always kept non-negative.

\paragraph{Example 4 ($3D$ BKW solution).}
The solution of this example is the exact $3D$ BKW solution, given by
\begin{equation}
  \f(t,\bv)=\frac{1}{(2\pi S)^{3/2}}\exp\left( -\frac{|\bv|^2}{2S} \right)
  \left( \frac{5S-3}{2S}+\frac{1-S}{2S^2}|\bv|^2 \right),
\end{equation}
where $S=1-2\exp(-t/6)/5$. Similarly to the $2D$ case, we first check the accuracy of EFM. At time
$t=0.01$, the $\ell_1$, $\ell_2$ and $\ell_\infty$ errors and the convergence rates are listed in
the Table \ref{tab:accuracyBKW3D}. Similar to the $2D$ case, the convergence rate is of the second
order and the errors are rather small.

\begin{table}[!ht]
  \centering
  \begin{tabular}{|c|c|c|c|c|c|c|}
    \hline
    $N$ & $\ell_1$ error  & rate & $\ell_2$ error  & rate & $\ell_{\infty}$ error & rate \\ \hline
    16  & $4.08\times10^{-3}$ &     & $3.08\times10^{-3}$ &     & $3.56\times10^{-3}$ &     \\ \hline 
    32  & $1.42\times10^{-3}$ & 1.52& $1.12\times10^{-3}$ & 1.47& $1.26\times10^{-3}$ & 1.50\\ \hline
    64  & $4.07\times10^{-4}$ & 1.80& $3.29\times10^{-4}$ & 1.76& $3.72\times10^{-4}$ & 1.76\\ \hline
    128 & $1.08\times10^{-4}$ & 1.91& $8.85\times10^{-5}$ & 1.90& $1.00\times10^{-4}$ & 1.89\\ \hline
  \end{tabular}
  \caption{\label{tab:accuracyBKW3D}The $\ell_1$, $\ell_2$ and $\ell_\infty$ errors and convergence
    rates for the BKW solution at time $t=0.01$ with $R=6$. }
\end{table}

\begin{figure}[!ht]
    \centering
    \subfigure{%
    \includegraphics[width=0.45\textwidth,clip]{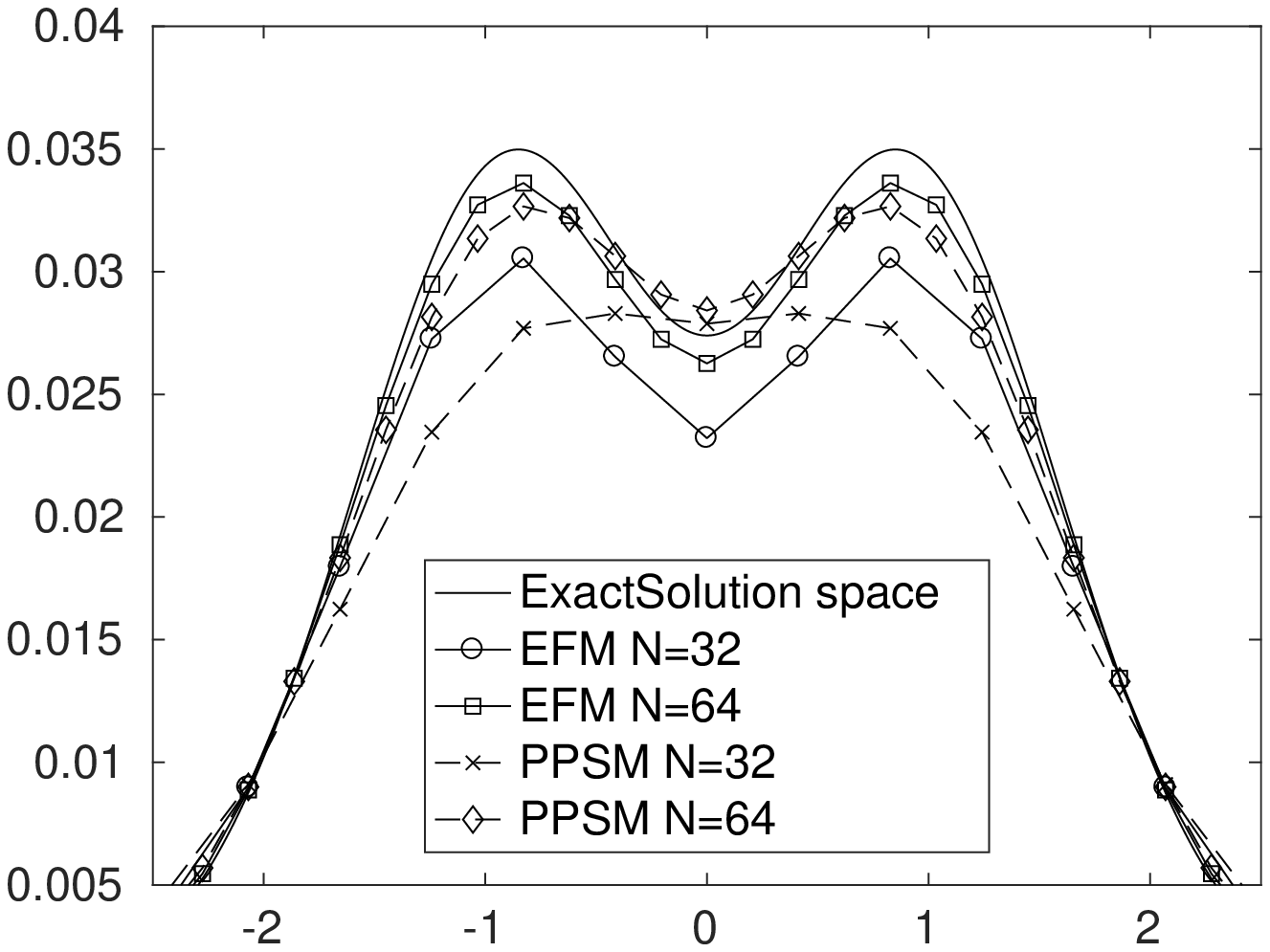}
    }\quad
    \subfigure{%
    \includegraphics[width=0.45\textwidth,clip]{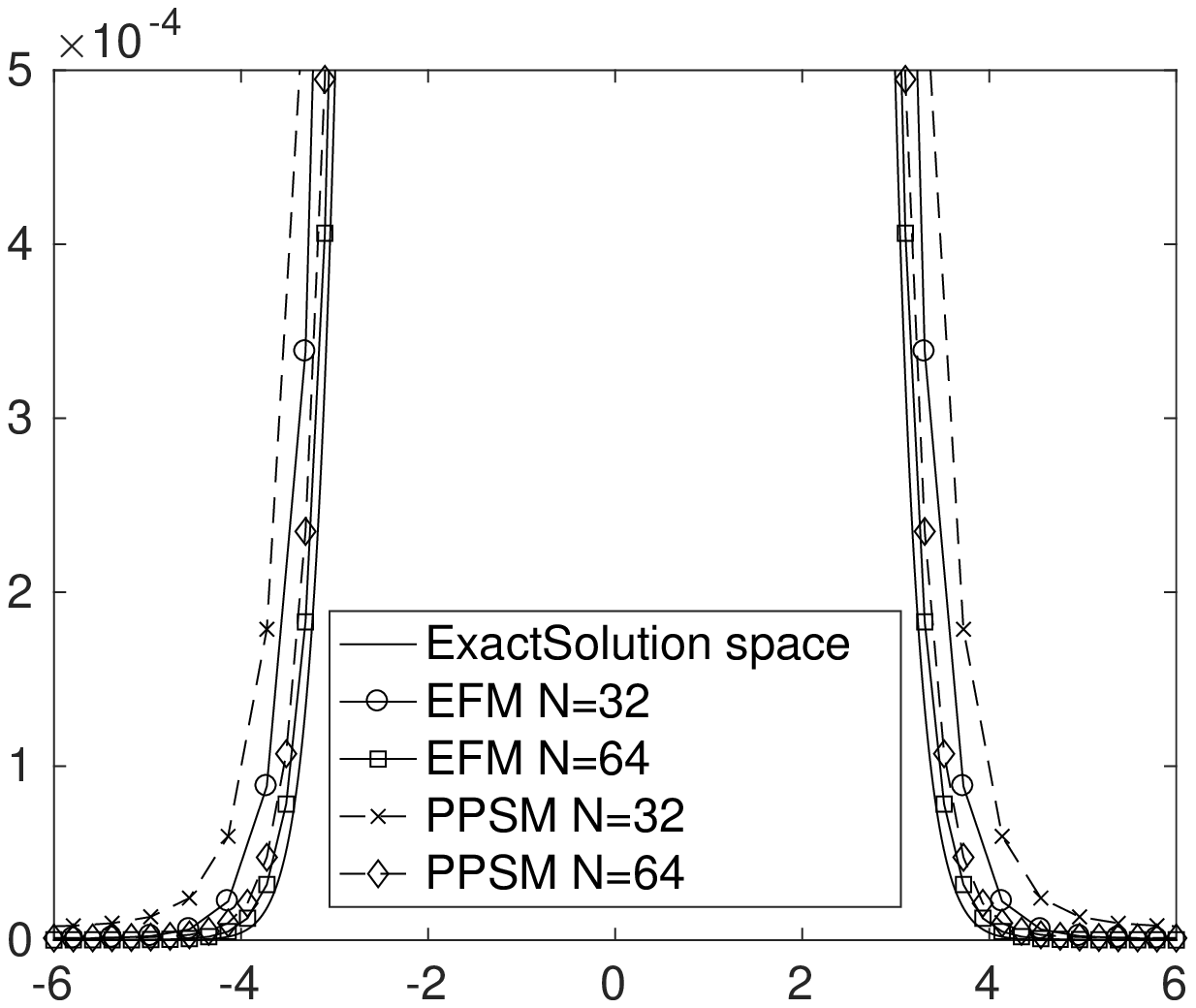}
    }
    \caption{\label{fig:PositiveVSHtheorem3D}Numerical solution of EFM and PPSM for multiple $N$
      values with the BKW solution at time $t=1$ on different scales.}
\end{figure}

As a comparison with PPSM, Figure \ref{fig:PositiveVSHtheorem3D}
presents the numerical solutions on the $v_1$ direction of PPSM and EFM at $t=1$ with $N=32$. The
plots clearly show that the smoothing filter used in EFM results in much less dissipation, thus
leading to a better agreement with the exact solution.

\section{Discussion}\label{sec:conclusion}

EFM proposed in this paper is a trade-off between accuracy and preservation of physical
properties. The resulting scheme can be viewed both as a discrete velocity method and a Fourier
method. In terms of the convergence rate, it is better than DVM but slower than FGM.  In terms of
physical properties, it guarantees positivity, mass conservation and a discrete H-theorem, while the
momentum and energy conservation is lost. Regarding the computational cost, fast algorithms in
\cite{Mouhot, Hu2016} remain valid for EFM. As to the future work, we plan to study how to
mitigate momentum and energy loss, where higher order accuracy is needed for long time simulation.
The numerical implementation of the spatially inhomogeneous setting is also in progress.

\section*{Acknowledgments}
The authors thank Jingwei Hu for discussion on the filter and fast algorithm on Boltzmann collision
term.

\bibliographystyle{plain}
\bibliography{article}
\end{document}